\documentclass[11pt]{amsart}
\usepackage[english]{babel}
\usepackage[T1]{fontenc}
\usepackage[latin1]{inputenc}
\usepackage{graphicx}
\usepackage{amsmath,amssymb,amsthm,mathrsfs,txfonts,ifthen,xr}
\usepackage{soul}
\usepackage{array, multirow}
\usepackage{stmaryrd}
\usepackage{mathrsfs}
\usepackage{color}
\usepackage{hyperref}
\usepackage{enumerate}
\usepackage[all]{xy}
\usepackage{tikz-cd}
\usetikzlibrary{decorations.pathreplacing}

\usepackage{ytableau}

\theoremstyle{plain}
\newtheorem{theo}{Theorem}[section]
\newtheorem{lemma}[theo]{Lemma}
\newtheorem{proposition}[theo]{Proposition}
\newtheorem{corollary}[theo]{Corollary}

\theoremstyle{definition}
\newtheorem{definition}[theo]{Definition}

\newtheorem{notation}[theo]{Notation}

\theoremstyle{remark}
\newtheorem{remark}[theo]{Remark}

\def\A{{\rm A}}
\def\B{{\rm B}}
\def\C{{\rm C}}
\def\D{{\rm D}}

\def\F{{\rm F}}
\def\G{{\rm G}}

\def\J{{\rm J}}

\def\M{{\rm M}}

\def\P{{\rm P}}

\def\U{{\rm U}}

\def\X{{\rm X}}
\def\Y{{\rm Y}}

\def\tr{{\rm tr}}

\def\GL{{\rm GL}}	

\def\Id{{\rm Id}}

\def\Mat{{\rm Mat}}

\def\log{{\rm log}}

\def\Spec{{\rm Spec}}

\def\diag{{\rm diag}}

\allowdisplaybreaks
\title{Kubo-Ando means on the cone of $\J$-Hermitian matrices}
\author{Sebastian Micalizzi}
\address{Department of Mathematics and Statistics\\ University of North Florida \\ 1 UNF Drive \\ Jacksonville \\ FL 32224 \\ USA}
\email{n01409569@unf.edu}
\author{Hayden Tyler}
\address{Department of Mathematics and Statistics\\ University of North Florida \\ 1 UNF Drive \\ Jacksonville \\ FL 32224 \\ USA}
\email{n01512071@unf.edu}

\keywords{J-Hermitian matrices, Kubo-Ando means, Quaternions}

\subjclass[2010]{Primary: 15A42; Secondary: 47A63.}

\date{}

\begin{document}

\begin{abstract}

The theory of Kubo-Ando means provides a fundamental correspondence between operator means and operator monotone functions on the positive half-line. In this paper, we first extend this theory to the cone of positive quaternionic Hermitian matrices, establishing quaternionic analogues of the main results of Kubo and Ando. We then develop an analogous framework for the cone of $\J$-Hermitian positive matrices associated with an indefinite inner product. In particular, we prove that Kubo-Ando means on this cone are again in one-to-one correspondence with operator monotone functions, thereby extending the classical characterization to this indefinite setting.

\end{abstract}

\maketitle

\tableofcontents

\section{Introduction}

The theory of Kubo-Ando means, introduced by Kubo and Ando in the early 1980s \cite{KUBOANDO}, provides a general framework for studying binary operations on positive definite matrices and positive operators. One of the fundamental achievements of their theory is the characterization of operator means by way of operator monotone functions. More precisely, they proved that every Kubo-Ando mean is uniquely determined by a normalized operator monotone function through a functional calculus formula. This correspondence unifies many classical matrix means, including the arithmetic, geometric, harmonic, logarithmic, and power means, and has become one of the central tools in matrix analysis and operator theory. Since its introduction, Kubo-Ando theory has found numerous applications in operator inequalities, matrix geometry, quantum information theory, mathematical physics, and numerical analysis\,.

\noindent More recently, several authors have investigated extensions of matrix geometric structures beyond the cone of positive definite matrices. In particular, in \cite{JOSEALLAN}, the authors introduced the cone of $\J$-Hermitian positive matrices associated with an indefinite inner product. They showed that many classical geometric constructions on the positive cone admit natural analogues in this new setting, including exponential and logarithm maps, powers, geodesics, and geometric means. These results suggest that the cone $\mathscr{P}_{\J}$ possesses a rich geometric structure closely related to that of the classical positive cone\,.

\noindent The purpose of the present paper is to show that this analogy extends to the theory of Kubo-Ando means. More precisely, we introduce the notion of a $\J$-Kubo-Ando mean and prove that the classical correspondence between operator means and operator monotone functions remains valid in this indefinite setting. Our approach is based on the natural bijection between the cone of positive definite matrices and the cone of $\J$-Hermitian positive matrices, allowing us to transport the classical Kubo-Ando theory almost verbatim. As a consequence, every classical operator mean admits a canonical $\J$-analogue satisfying the same structural properties\,.

\noindent Before studying the indefinite setting, we also consider the quaternionic case. Although the functional calculus for quaternionic Hermitian matrices is well understood, a systematic treatment of Kubo-Ando means over the quaternionic positive cone does not seem to have appeared in the literature. We therefore establish quaternionic analogues of the main results of Kubo and Ando, proving that normalized operator monotone functions again classify operator means. This provides a unified treatment of Kubo-Ando means over the three real division algebras
\begin{equation*}
\mathbb{R},\qquad
\mathbb{C},\qquad
\mathbb{H},
\end{equation*}
before passing to the indefinite $\J$-Hermitian setting\,.

\medskip

\noindent The paper is organized as follows. In Section \ref{SectionPreliminaries}, we recall the necessary background on positive definite matrices, quaternionic Hermitian matrices, and the cone of $\J$-Hermitian positive matrices. We then establish in Section \ref{SectionQuaternions} the quaternionic analogue of the Kubo-Ando correspondence. In Section \ref{SectionKuboAndoPJ}, we introduce $\J$-Kubo-Ando means, prove that they are in one-to-one correspondence with classical Kubo-Ando means, and establish their representation in terms of normalized operator monotone functions. Finally, we illustrate the theory by deriving explicit formulas for several classical means, including the arithmetic, harmonic, power, and geometric means\,.

\bigskip

\noindent \textbf{Acknowledgements:} The authors wish to thank Jose Franco and Allan Merino for their authorship of "The Cone of J-Hermitian Means and a Geometric Mean", which robustly develops important tools used in our research. We are indebted to Dr. Merino for his mentorship during this project. He was instrumental in communicating the motivation and utility of these methods. His perspective as a researcher allowed us to direct our efforts towards meaningful research and exposition of it. Without his willingness to meet our questions with both expertise and enthusiasm, this paper would not exist in its current form\,.

\section{Preliminaries}

\label{SectionPreliminaries}

Let $\mathbb{D} \in \left\{\mathbb{R}\,, \mathbb{C}\right\}$. For all $n \in \mathbb{N}$, we denote by $\Mat_{n}(\mathbb{D})$ the set of $n \times n$ matrices with entries in $\mathbb{D}$, and by $\GL_{n}(\mathbb{D})$ the set of invertible matrices in $\Mat_{n}(\mathbb{D})$. For all $p\,, q \in \mathbb{N}$ such that $p+q = n$, we denote by $\J := \J_{p, q}$ the matrix in $\GL_{n}(\mathbb{D})$ given by
\begin{equation*}
\J = \begin{pmatrix} \Id_{p} & 0 \\ 0 & -\Id_{q}\end{pmatrix}\,.
\end{equation*}

\begin{notation}

Throughout the paper, we adopt the notation of \cite{JOSEALLAN}. For all $\X \in \Mat_{n}(\mathbb{D})$, we denote by $\X^{*}$ and $\X^{\sharp}$ the matrices respectively given by
\begin{equation*}
\X^{*} := \overline{\X}^{t}\,, \qquad \qquad \X^{\sharp} := \J\X^{*}\J\,.
\end{equation*}

\end{notation}

\noindent We denote by $\U$ and $\U_{\J}$ the subgroups of $\GL_{n}(\mathbb{D})$ given by
\begin{equation*}
\U = \left\{g \in \GL_{n}(\mathbb{D})\,, gg^{*} = \Id_{n}\right\}\,, \qquad \U_{\J} = \left\{g \in \GL_{n}(\mathbb{D})\,, gg^{\sharp} = \Id_{n}\right\}\,,
\end{equation*}

\begin{definition}

Let $\A \in \Mat_{n}(\mathbb{D})$. We say that $\A$ is positive (resp. positive semi-definite) and write $\A > 0$ (resp. $\A \geq 0$) if for all non-zero $x \in \mathbb{D}^{n}$, we get $x^{*}\A x > 0$ (resp. $x^{*}\A x \geq 0$)\,.

\end{definition}

\noindent We denote by $\mathfrak{p}$ and $\mathscr{P}$ the subsets of $\Mat_{n}(\mathbb{D})$ and $\GL_{n}(\mathbb{D})$ respectively given by
\begin{equation*}
\mathfrak{p} = \left\{\X \in \Mat_{n}(\mathbb{D})\,, \X = \X^{*}\right\}\,, \qquad \mathscr{P} = \left\{\X \in \mathfrak{p}\,, \X > 0\right\}\,.
\end{equation*}
It is well known (see \cite{BHATIA}) that the exponential map
\begin{equation*}
\exp: \Mat_{n}(\mathbb{D}) \ni \X \to \exp(\X) = \sum\limits_{k = 0}^{\infty} \frac{\X^{k}}{k!} \in \GL_{n}(\mathbb{D})
\end{equation*}
is such that $\exp(\mathfrak{p}) \subseteq \mathscr{P}$. Moreover, the map
\begin{equation*}
\exp: \mathfrak{p} \to \mathscr{P}
\end{equation*}
is bijective. We denote by 
\begin{equation*}
\log: \mathscr{P} \to \mathfrak{p}
\end{equation*}
the inverse of $\exp$\,.

\begin{definition}

For all $t \in \mathbb{R}$ and $\X \in \mathscr{P}$, we define the $t$-th root of $\X$ given by
\begin{equation*}
\X^{t} = \exp\left(t\log(\X)\right)\,.
\end{equation*}

\end{definition}

\begin{remark}

One can easily see that the cone $\mathscr{P}$ is not a group. However, 
\begin{itemize}
\item For all $\X \in \mathscr{P}$, $\X^{-1} \in \mathscr{P}$\,,
\item For all $\A\,, \B \in \mathscr{P}$, $\A^{-\frac{1}{2}}\B\A^{-\frac{1}{2}} \in \mathscr{P}$\,.
\end{itemize}
The second property will be useful in Kubo-Ando means theory\,.

\end{remark}

\noindent In the early 1980s, Kubo and Ando introduced a general axiomatic framework for binary operations on positive definite matrices and positive operators, now known as Kubo-Ando means \cite{KUBOANDO}. Their fundamental theorem establishes a one-to-one correspondence between operator means and operator monotone functions on the positive half-line, thereby providing a unified treatment of many classical matrix means, including the arithmetic, geometric, and harmonic means. Since then, Kubo-Ando theory has become one of the central tools in matrix analysis and operator theory, with numerous applications to operator inequalities, matrix geometry, quantum information theory, and mathematical physics. We now recall the definition of a Kubo-Ando mean in the matrix setting\,.

\begin{remark}

The cone $\mathscr{P}$ is naturally equipped with the Loewner partial order. More precisely, for all $\A\,, \B \in \mathscr{P}$, we write
\begin{equation*}
\A \preceq \B \qquad \text{ if } \qquad  \B - \A \geq 0\,,
\end{equation*}
that is, if the matrix $\B-\A$ is positive semi-definite. It is well known (see \cite{BHATIA}) that $\preceq$ defines a partial order on $\mathscr{P}$\,.

\label{RemarkLoewnerOrder}

\end{remark}

\begin{definition}

A Kubo-Ando mean on $\mathscr{P}$ is a continuous map
\begin{equation*}
\sigma: \mathscr{P} \times \mathscr{P} \mapsto \mathscr{P}
\end{equation*}
such that
\begin{enumerate}
\item If $\A \preceq \B$ and $\C \preceq \D$, then $\A \sigma \C \preceq \B \sigma \D$\,.
\item For all $g \in \GL_{n}(\mathbb{D})$ and $\A\,, \B \in \mathscr{P}$, we have
\begin{equation*}
g\left(\A \sigma \B\right)g^{*} = \left(g \A g^{*}\right) \sigma \left(g \B g^{*}\right)\,.
\end{equation*}
\item $\Id_{n} \sigma \Id_{n} = \Id_{n}$\,.
\end{enumerate}

\end{definition}

\noindent One of the main theorems of \cite{KUBOANDO} is summarized in the following theorem\,.

\begin{theo}

For all Kubo-Ando means $\sigma$ on $\mathscr{P}$, there exists a unique operator monotone function $f: \left(0\,, +\infty\right) \to \left(0\,, +\infty\right)$, with $f(1) = 1$, satisfying
\begin{equation}
\A \sigma \B = \A^{\frac{1}{2}} f\left(\A^{-\frac{1}{2}}\B\A^{-\frac{1}{2}}\right)\A^{\frac{1}{2}}\,, \qquad \left(\A\,, \B \in \mathscr{P}\right)\,.
\label{EquationKuboAndoSigma}
\end{equation}

\label{TheoremKuboAndoMean}

\end{theo}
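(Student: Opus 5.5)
The strategy is to use the congruence invariance in axiom (2) to reduce the whole mean to its values on pairs of the form $(\Id_n, \X)$, and then show that this restricted map is a functional calculus $\X \mapsto f(\X)$ for a scalar operator monotone function $f$. For $\A, \B \in \mathscr{P}$, apply axiom (2) with $g = \A^{\frac12}$ (invertible, and $g^{*} = g$) to the pair $(\Id_n, \A^{-\frac12}\B\A^{-\frac12})$; the second bullet of the Remark guarantees this pair lies in $\mathscr{P}\times\mathscr{P}$. This gives
\begin{equation*}
\A \sigma \B = \A^{\frac{1}{2}}\left(\Id_n \sigma \A^{-\frac{1}{2}}\B\A^{-\frac{1}{2}}\right)\A^{\frac{1}{2}}\,.
\end{equation*}
It therefore suffices to produce $f$ with $\Id_n \sigma \X = f(\X)$ for all $\X \in \mathscr{P}$. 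Uniqueness is then immediate, since $f(t)\Id_n = \Id_n \sigma (t\Id_n)$ for every $t>0$.

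The map $\phi(\X) := \Id_n \sigma \X$ is equivariant under the unitary group: for $u \in \U$, axiom (2) with $g=u$ gives $u\phi(\X)u^{*} = \phi(u\X u^{*})$. I will define $f$ by $\phi(t\Id_n) = f(t)\Id_n$, and I first need $\phi(t\Id_n)$ to be a scalar matrix. It commutes with every unitary, so over $\mathbb{C}$ Schur's lemma makes it scalar. Over $\mathbb{R}$ the orthogonal group still acts irreducibly, so the same conclusion holds. Then $f(1)=1$ by axiom (3), and $f(t)>0$ because $\phi(t\Id_n)\in\mathscr{P}$.

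Next, take a diagonal $\X = \diag(\lambda_1,\dots,\lambda_n)$. Conjugating by the diagonal unitaries $\diag(\pm1,\dots,\pm1)$ fixes $\X$, so $\phi(\X)$ commutes with all of them and is therefore diagonal. The real difficulty is showing that its $i$-th entry is $f(\lambda_i)$ and does not depend on the other eigenvalues. Here I plan to follow Kubo and Ando and use axiom (2) with a non-invertible-looking projection argument. Congruence by $g = \diag(1,\dots,1,\varepsilon,1,\dots)$ with $\varepsilon \to 0$ is not allowed directly, since $g$ must be invertible. Instead I combine invertible $g = \diag(c_1,\dots,c_n)$ with continuity:
\begin{equation*}
\phi(\X)\ \text{diagonal with entries } d_i(\lambda_1,\dots,\lambda_n), \qquad g\phi(\X)g^{*} = \diag(c_i^2)\sigma \diag(c_i^2\lambda_i)\,.
\end{equation*}
From this, the monotonicity of axiom (1) sandwiches $d_i$ between values obtained by replacing the other $\lambda_j$ with $\min$ and $\max$ values. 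Homogeneity in the $c_j$ for $j\neq i$, together with continuity, then forces $d_i$ to depend only on $\lambda_i$, so $d_i=f(\lambda_i)$. This decoupling step is the one I expect to be the main obstacle, and I would check it carefully in the $2\times 2$ case first. Equivariance then extends the formula to all of $\mathscr{P}$ via the spectral theorem: $\phi(u\Lambda u^{*}) = u f(\Lambda)u^{*} = f(u\Lambda u^{*})$.

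Finally, $f$ is operator monotone. For $\X \preceq \Y$ in $\mathscr{P}$, axiom (1) applied with $\Id_n \preceq \Id_n$ gives $f(\X)=\Id_n\sigma\X \preceq \Id_n\sigma\Y = f(\Y)$. This holds in every dimension $n$; if operator monotonicity is understood in all dimensions, I would invoke the means in every dimension, as in \cite{KUBOANDO}, or embed the smaller matrices block-diagonally. Continuity of $f$ follows from continuity of $\sigma$.
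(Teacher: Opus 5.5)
The paper does not prove this theorem; it quotes it from Kubo and Ando. Its quaternionic version (Propositions \ref{PropositionOne}, \ref{PropositionTwo} and Corollary \ref{CorollaryOne}) has the same skeleton as your proposal. Schur's lemma makes $\Id_n\sigma a\Id_n$ scalar, the diagonal case is treated, the spectral theorem and congruence by $\A^{\frac{1}{2}}$ give the formula, and axiom (1) with first argument $\Id_n$ gives monotonicity of $f$. You genuinely differ at the diagonal step. The paper imitates Kubo and Ando, whose means are defined on all positive semi-definite operators: it compresses by the coordinate projections $\P_i$ and uses $\P_i\sigma d_i\P_i=f(d_i)\P_i$. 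You never leave $\mathscr{P}$. You get diagonality from the sign matrices $\diag(\pm1,\dots,\pm1)$, and decoupling from invertible diagonal congruences combined with joint monotonicity. The projection route is shorter, but it needs $\sigma$ on singular matrices, which the axioms on $\mathscr{P}$ do not give. The remark following Proposition \ref{PropositionTwo} does not supply this either, since $\G_i(\varepsilon)\D\G_i(\varepsilon)^{*}\to(\Id_n-\P_i)\D$, which is singular. Your route uses only the stated axioms and works verbatim over $\mathbb{H}$.

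Your decoupling sketch does close, but write it out: continuity plays no role, and the decisive ingredient is monotonicity in the \emph{first} argument. Set up as follows:
\begin{itemize}
\item Let $d_i(\X)$ denote the $(i,i)$ entry of $\Id_n\sigma\X$, and put $m=\min_k\lambda_k$, $M=\max_k\lambda_k$.
\item Let $\Lambda_m$, $\Lambda_M$ be $\Lambda=\diag(\lambda_1,\dots,\lambda_n)$ with every entry $j\neq i$ replaced by $m$, resp.\ $M$.
\item Let $C$ be diagonal with $c_i=1$ and $c_j=\sqrt{M/m}$ for $j\neq i$.
\end{itemize}
Then $\Id_n\preceq C^{2}$ and $C\Lambda_mC=\Lambda_M$, so
\[
\Id_n\sigma\Lambda_M\preceq C^{2}\sigma\Lambda_M=C\left(\Id_n\sigma\Lambda_m\right)C,
\]
whence $d_i(\Lambda_M)\le d_i(\Lambda_m)$. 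Both $\Lambda$ and $\lambda_i\Id_n$ lie between $\Lambda_m$ and $\Lambda_M$, so monotonicity in the second argument gives $d_i(\Lambda)=d_i(\lambda_i\Id_n)=f(\lambda_i)$.

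Your proposed fix for the dimension issue does not work. Block-diagonal embedding only yields monotonicity of order at most $n$, and a single fixed-$n$ mean cannot yield more. For $n=1$, $a\sigma b=\min(a,b)$ satisfies all three axioms with $f(x)=\min(x,1)$, which is not operator monotone. So the conclusion must be read as monotonicity of order $n$, or $\sigma$ must be given in all dimensions as in Kubo and Ando. Corollary \ref{CorollaryOne} glosses over the same point.
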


\noindent In this paper, we will
\begin{itemize}
\item Extend Theorem \ref{TheoremKuboAndoMean} to the set of quaternionic hermitian matrices\,,
\item Define Kubo-Ando means for $\J$-Hermitian matrices and prove an analogue of Theorem \ref{TheoremKuboAndoMean} for these matrices\,.
\end{itemize}

\noindent We now recall some results of \cite{JOSEALLAN}. We denote by $\mathfrak{p}_{\J}$ the subset of $\Mat_{n}(\mathbb{D})$ given by
\begin{equation*}
\mathfrak{p}_{\J} = \left\{\X \in \Mat_{n}(\mathbb{D})\,, \X = \X^{\sharp}\right\}\,.
\end{equation*}
One of the main difference with the positive case (i.e. $\mathfrak{p}$) is that the restriction of $\exp$ to $\mathfrak{p}_{\J}$ is not injective. We will therefore replace $\exp$ by the $\J$-exponential map $\exp_{\J}$ introduced in \cite{JOSEALLAN}. We recall a key lemma (see \cite[Lemma~2.9]{JOSEALLAN})\,.

\begin{lemma}

The map 
\begin{equation*}
\Phi_{\J}: \mathfrak{p}_{\J} \to \mathfrak{p}\,, \qquad \X \mapsto \J\X\,,
\end{equation*}
is well-defined and bijective.

\label{LemmaBijectionPJP}

\end{lemma}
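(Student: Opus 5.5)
The plan is to verify the two claims separately: first that $\J\X$ is Hermitian whenever $\X \in \mathfrak{p}_{\J}$, and then that the map $\X \mapsto \J\X$ has an explicit inverse $\Y \mapsto \J\Y$. Both steps rest on the elementary facts about $\J = \J_{p,q}$ that
\begin{equation*}
\J^{*} = \J\,, \qquad \J^{2} = \Id_{n}\,, \qquad \J^{-1} = \J\,,
\end{equation*}
which hold because $\J$ is a real diagonal matrix with entries $\pm 1$.

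\textbf{Well-definedness.} Take $\X \in \mathfrak{p}_{\J}$, so $\X = \X^{\sharp} = \J\X^{*}\J$. Then
\begin{equation*}
(\J\X)^{*} = \X^{*}\J^{*} = \X^{*}\J\,.
\end{equation*}
On the other hand, substituting the defining relation gives
\begin{equation*}
\J\X = \J(\J\X^{*}\J) = \J^{2}\X^{*}\J = \X^{*}\J\,.
\end{equation*}
Comparing the two displays, $(\J\X)^{*} = \J\X$, so $\J\X \in \mathfrak{p}$ and $\Phi_{\J}$ does map $\mathfrak{p}_{\J}$ into $\mathfrak{p}$.

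\textbf{Bijectivity.} Define $\Psi: \mathfrak{p} \to \Mat_{n}(\mathbb{D})$ by $\Psi(\Y) = \J\Y$. For $\Y = \Y^{*}$ we compute
\begin{equation*}
(\J\Y)^{\sharp} = \J(\J\Y)^{*}\J = \J\Y^{*}\J^{*}\J = \J\Y\J^{2} = \J\Y\,.
\end{equation*}
Hence $\Psi$ takes values in $\mathfrak{p}_{\J}$. Since $\J^{2} = \Id_{n}$, the compositions satisfy $\Psi \circ \Phi_{\J} = \Id$ on $\mathfrak{p}_{\J}$ and $\Phi_{\J} \circ \Psi = \Id$ on $\mathfrak{p}$. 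Therefore $\Phi_{\J}$ is a bijection with inverse $\Psi$.

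There is no genuine obstacle here. The only point requiring care is the order of factors when taking adjoints of products, together with consistent use of $\J^{*} = \J$ and $\J^{2} = \Id_{n}$; the argument is the same for $\mathbb{D} = \mathbb{R}$ and $\mathbb{D} = \mathbb{C}$.
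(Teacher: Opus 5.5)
Your argument is correct. It is the direct verification that the paper omits (the paper only cites \cite[Lemma~2.9]{JOSEALLAN}): $\J^{*} = \J$ and $\J^{2} = \Id_{n}$ give $(\J\X)^{*} = \X^{*}\J = \J\X$ for $\X \in \mathfrak{p}_{\J}$, and $\Y \mapsto \J\Y$ is a two-sided inverse landing in $\mathfrak{p}_{\J}$. The same computation also works for $\mathbb{D} = \mathbb{H}$, since $(\A\B)^{*} = \B^{*}\A^{*}$ still holds for quaternionic matrices and $\J$ has real entries; the paper needs this when it later takes $\mathbb{D} \in \{\mathbb{R}, \mathbb{C}, \mathbb{H}\}$. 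Rename your inverse map, though, since $\Psi$ already denotes the embedding $\mathbb{H} \hookrightarrow \Mat_{2}(\mathbb{C})$ in the paper.
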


\noindent We denote by $\exp_{\J}: \mathfrak{p}_{\J} \to \Mat_{n}(\mathbb{D})$ the map given by
\begin{equation*}
\exp_{\J} := \Phi^{-1}_{\J} \circ \exp \circ \Phi_{\J}\,,
\end{equation*}
and let $\mathscr{P}_{\J} := \exp_{\J}(\mathfrak{p}_{\J})$. As explained in \cite[Section~2]{JOSEALLAN}, we have
\begin{equation*}
\mathscr{P}_{\J} = \left\{\X \in \mathfrak{p}_{\J}\,, \J\X > 0\right\}\,.
\end{equation*}
We denote by $\log_{\J}$ the inverse of $\exp_{\J}$\,.

\begin{remark}

Using Remark \ref{RemarkLoewnerOrder}, we define the $\J$-Loewner order on the cone $\mathscr{P}_{\J}$. More precisely, for all $\A\,, \B \in \mathscr{P}_{\J}$, we write
\begin{equation*}
\A \preceq_{\J} \B \qquad \Longleftrightarrow \qquad \J\A \preceq \J\B\,.
\end{equation*}
Since the map
\begin{equation*}
\mathscr{P}_{\J} \longrightarrow \mathscr{P}\,, \qquad \A \longmapsto \J\A\,,
\end{equation*}
is a bijection (see Lemma \ref{LemmaBijectionPJP}), it follows immediately that $\preceq_{\J}$ is a partial order on $\mathscr{P}_{\J}$\,.

\end{remark}

\begin{definition}

For all $t \in \mathbb{R}$ and $\X \in \mathscr{P}_{\J}$, we denote by $\X^{t}_{\J}$ the matrix in $\mathscr{P}_{\J}$ given by
\begin{equation*}
\X^{t}_{\J} = \exp_{\J}\left(t\log_{\J}(\X)\right)\,.
\end{equation*}
In particular, we have
\begin{equation*}
\X^{t}_{\J} = \J\left(\J\X\right)^{t}\,.
\end{equation*}

\end{definition}

\noindent What is surprising, compared with the positive case, is that in general, we have
\begin{equation}
\X^{t}_{\J}\X^{s}_{\J} \neq \X^{t+s}_{\J}\,, \qquad \left(s\,, t \in \mathbb{R}\,, \X \in \mathscr{P}_{\J}\right)\,.
\label{JSRoot}
\end{equation}
One way to solve this issue is to define a different product on $\mathscr{P}$. We denote by $\bullet$ the map
\begin{equation*}
\bullet: \Mat_{n}(\mathbb{D}) \times \Mat_{n}(\mathbb{D}) \to \Mat_{n}(\mathbb{D})
\end{equation*}
given by
\begin{equation*}
\A \bullet \B = \A\J\B\,, \qquad \left(\A\,, \B \in \Mat_{n}(\mathbb{D})\right)\,.
\end{equation*}
One can see that 
\begin{equation*}
\A \bullet \J = \A\J\J = \A\J^{2} = \A\,, \qquad \left(\A \in \Mat_{n}(\mathbb{D})\right)\,,
\end{equation*}
i.e. $\J$ is the identity element for $\bullet$. Moreover, one can see that a matrix $\A \in \Mat_{n}(\mathbb{D})$ is invertible for $\bullet$ if and only if $\A$ is invertible, and that
\begin{equation*}
\X^{s}_{\J} \bullet \X^{t}_{\J} = \X^{s+t}_{\J}\,, \qquad \left(s\,, t \in \mathbb{R}\,, \X \in \mathscr{P}_{\J}\right)\,.
\end{equation*}

\section{The cone of positive quaternionic hermitian matrices}

\label{SectionQuaternions}

We briefly recall some standard facts about the quaternion algebra. We refer the reader to \cite{ZHANG} for more details. The algebra of quaternions is defined by
\begin{equation*}
\mathbb{H} = \left\{a+b\mathbf{i}+c\mathbf{j}+d\mathbf{k}\,, a\,, b\,, c\,, d \in \mathbb{R}\right\}\,,
\end{equation*}
where
\begin{equation*}
\mathbf{i}^{2} = \mathbf{j}^{2} = \mathbf{k}^{2} = \mathbf{i}\mathbf{j}\mathbf{k} = -1\,.
\end{equation*}
In particular,
\begin{equation*}
\mathbf{i}\mathbf{j} = \mathbf{k}\,, \qquad \mathbf{j}\mathbf{k} = \mathbf{i}\,, \qquad \mathbf{k}\mathbf{i} = \mathbf{j}\,,
\end{equation*}
and
\begin{equation*}
\mathbf{j}\mathbf{i} = -\mathbf{k}\,, \qquad \mathbf{k}\mathbf{j} = -\mathbf{i}\,, \qquad \mathbf{i}\mathbf{k} = -\mathbf{j}\,.
\end{equation*}
Every quaternion can be written uniquely as
\begin{equation*}
q = z_{1}+z_{2}\mathbf{j}\,, \qquad \left(z_{1}\,, z_{2} \in \mathbb{C}\right)\,,
\end{equation*}
where $\mathbb{C}=\mathbb{R}\oplus\mathbb{R}\mathbf{i}$. Moreover,
\begin{equation*}
z\mathbf{j} = \mathbf{j}\overline{z}\,, \qquad \left(z \in \mathbb{C}\right)\,,
\end{equation*}
and the quaternionic conjugate is given by
\begin{equation}
\overline{q} = \overline{z_{1}} - z_{2}\mathbf{j}\,.
\label{ConjugationQuaternions}
\end{equation}
Finally,
\begin{equation*}
q\overline{q} = \overline{q}q = \left|z_{1}\right|^{2} + \left|z_{2}\right|^{2}\,,
\end{equation*}
which defines the Euclidean norm $\left|q\right| := \sqrt{q\overline{q}}$ of $q$\,.

\begin{remark}

The quaternion algebra admits the faithful embedding
\begin{equation*}
\Psi : \mathbb{H} \hookrightarrow \Mat_{2}(\mathbb{C})\,, \qquad z_{1}+z_{2}\mathbf{j} \longmapsto \begin{pmatrix} z_{1} & z_{2} \\ -\overline{z_{2}} & \overline{z_{1}} \end{pmatrix}\,.
\end{equation*}
More generally, if
\begin{equation*}
\A = \A_{1} + \A_{2}\mathbf{j} \in \Mat_{n}(\mathbb{H})\,,
\end{equation*}
where $\A_{1}\,, \A_{2}\in\Mat_{n}(\mathbb{C})$, then $\Psi$ extends to the embedding
\begin{equation*}
\Psi(\A) = \begin{pmatrix} \A_{1} & \A_{2} \\ -\overline{\A_{2}} & \overline{\A_{1}} \end{pmatrix} \in \Mat_{2n}(\mathbb{C})\,.
\end{equation*}

\end{remark}

\noindent We now briefly recall some standard facts about quaternionic matrices. We refer the reader to \cite{ZHANG} for more details. For all $n \in \mathbb{N}$, we denote by $\Mat_{n}(\mathbb{H})$ the algebra of $n \times n$ matrices with quaternionic entries and by $\GL_{n}(\mathbb{H})$ the group of invertible matrices. Since the quaternion algebra is not commutative, one has to distinguish between left and right eigenvalues. Throughout this paper, we only consider right eigenvalues. More precisely, a quaternion $\lambda \in \mathbb{H}$ is called a right eigenvalue of $\A \in \Mat_{n}(\mathbb{H})$ if there exists a non-zero vector $x \in \mathbb{H}^{n}$ such that
\begin{equation*}
\A x = x\lambda\,.
\end{equation*}
Unlike the real and complex cases, right eigenvalues are not uniquely determined. Indeed, if $\lambda$ is a right eigenvalue of $\A$ and $q \in \mathbb{H}^{\times}$, then $q^{-1}\lambda q$ is also a right eigenvalue of $\A$. Consequently, the spectrum of a quaternionic matrix is naturally regarded as a collection of conjugacy classes in $\mathbb{H}$.

\noindent For all $\A = \left(a_{ij}\right) \in \Mat_{n}(\mathbb{H})$, we define
\begin{equation*}
\A^{*} =\left(\overline{a_{j,i}}\right)\,,
\end{equation*}
where $\overline{q}$ denotes the quaternionic conjugate of $q \in \mathbb{H}$ (see \eqref{ConjugationQuaternions}). As in the real and complex case, we sat that a matrix $\A \in \Mat_{n}(\mathbb{H})$ is said to be Hermitian if
\begin{equation*}
\A=\A^{*}\,.
\end{equation*}
The spectral theory of quaternionic Hermitian matrices is remarkably similar to its real and complex counterparts. Indeed, every right eigenvalue of a Hermitian matrix is a real number. In particular, every conjugacy class reduces to a single point, so that the spectrum is well defined as a finite subset of $\mathbb{R}$.

\noindent We denote by $\U_{\mathbb{H}}$ the subgroup of $\GL_{n}(\mathbb{H})$ given by
\begin{equation*}
\U_{\mathbb{H}} := \left\{\U \in \GL_{n}(\mathbb{H})\,, \U\U^{*} = \Id_{n}\right\}
\end{equation*}
the quaternionic unitary group. Every Hermitian matrix $\A \in \Mat_{n}(\mathbb{H})$ admits a spectral decomposition
\begin{equation}
\A = \U \begin{pmatrix} \lambda_{1} & & \\ & \ddots & \\ & & \lambda_{n} \end{pmatrix}\U^{*}\,,
\label{SpectralDecompositionQuaternion}
\end{equation}
where $\U \in \U_{\mathbb{H}}$ and $\lambda_{1}\,, \ldots\,, \lambda_{n} \in \mathbb{R}$ are the eigenvalues of $\A$. Consequently, the functional calculus is defined exactly as in the complex case\,.

\noindent We denote by
\begin{equation*}
\mathfrak{p} := \mathfrak{p}_{\mathbb{H}} = \left\{\X \in \Mat_{n}(\mathbb{H})\,, \X=\X^{*}\right\}\,,
\end{equation*}
and by
\begin{equation*}
\mathscr{P} := \mathscr{P}_{\mathbb{H}} = \left\{\X \in \mathfrak{p}\,, \Spec(\X) \subseteq \left(0\,, +\infty\right)\right\}
\end{equation*}
the cone of positive quaternionic Hermitian matrices. As in the complex setting, the exponential map restricts to a bijection
\begin{equation*}
\exp : \mathfrak{p} \longrightarrow \mathscr{P}\,,
\end{equation*}
whose inverse is denoted by
\begin{equation*}
\log : \mathscr{P} \longrightarrow \mathfrak{p}\,.
\end{equation*}

\begin{remark}

We first observe that the Loewner order (see \cite{BHATIA}) extends naturally to the quaternionic setting. More precisely, for all $\A\,, \B \in \mathscr{P}$, we write
\begin{equation*}
\A \preceq \B \qquad \text{ if } \qquad  \B - \A \geq 0\,,
\end{equation*}
that is, if $\B-\A$ is positive semi-definite. Since $\mathscr{P}$ consists of Hermitian matrices with positive real spectrum, this defines a partial order exactly as in the real and complex settings\,.

\end{remark}

\noindent We are now ready to introduce the quaternionic analogue of Kubo-Ando means\,.

\begin{definition}

A Kubo-Ando mean on $\mathscr{P} = \mathscr{P}_{\mathbb{H}}$ is a continuous map
\begin{equation*}
\sigma: \mathscr{P} \times \mathscr{P} \longrightarrow \mathscr{P}
\end{equation*}
satisfying the following properties:
\begin{enumerate}
\item If $\A \preceq \B$ and $\C \preceq \D$, then $\A \sigma \C \preceq \B \sigma \D$\,.
\item For every $g\in\GL_{n}(\mathbb{H})$, and $\A\,, \B \in \mathscr{P}$, we get
\begin{equation*}
g\left(\A\sigma\B\right)g^{*} = \left(g\A g^{*}\right) \sigma \left(g\B g^{*}\right)\,.
\end{equation*}
\item $\Id_{n} \sigma \Id_{n} = \Id_{n}$\,.
\end{enumerate}

\label{DefinitionKuboAndoH}

\end{definition}

\noindent We first prove the following propositions\,.

\begin{proposition}

Let $\sigma$ be a Kubo-Ando mean on $\mathscr{P}$. Then there exists a unique function
\begin{equation*}
f: \left(0\,, +\infty\right) \longrightarrow \left(0\,, +\infty\right)
\end{equation*}
such that for every $a>0$, we have
\begin{equation}
\Id_{n} \sigma a\Id_{n} = f(a)\Id_{n}\,.
\label{RepresentingFunctionQuaternionic}
\end{equation}

\label{PropositionOne}

\end{proposition}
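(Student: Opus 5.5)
The plan is to show that $\M := \Id_{n} \sigma a\Id_{n}$ commutes with every quaternionic unitary matrix, and hence is a real scalar multiple of $\Id_{n}$. Fix $a>0$. For any $\U \in \U_{\mathbb{H}} \subseteq \GL_{n}(\mathbb{H})$, property (2) of Definition \ref{DefinitionKuboAndoH} gives
\begin{equation*}
\U \M \U^{*} = \left(\U\U^{*}\right) \sigma \left(a\U\U^{*}\right) = \Id_{n} \sigma a\Id_{n} = \M\,.
\end{equation*}
Here we use that the real scalar $a$ is central in $\Mat_{n}(\mathbb{H})$. Thus $\U\M = \M\U$ for all $\U \in \U_{\mathbb{H}}$.

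Next we deduce that $\M$ is scalar. Since $\M \in \mathscr{P}$, it is Hermitian and admits a spectral decomposition as in \eqref{SpectralDecompositionQuaternion}. Rather than argue abstractly, we test against specific unitaries.

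\textbf{Diagonal unitaries.} Take $\U = \diag(u_{1}, \ldots, u_{n})$ with $u_{k} \in \{\pm 1\}$; commutation gives $u_{i}m_{ij} = m_{ij}u_{j}$. Choosing $u_{i} = -1$ and $u_{j} = 1$ for $i \neq j$ forces $m_{ij} = 0$, so $\M$ is diagonal.

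\textbf{Permutation matrices.} These are real orthogonal, hence in $\U_{\mathbb{H}}$. Commuting with the transposition of $i$ and $j$ forces $m_{ii} = m_{jj}$, so $\M = m\Id_{n}$ for some $m \in \mathbb{H}$.

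\textbf{Reality and positivity of $m$.} Since $\M$ is Hermitian, $m = \overline{m}$, so $m \in \mathbb{R}$. Since $\M \in \mathscr{P}$, its spectrum is $\{m\} \subseteq (0, +\infty)$, so $m > 0$.

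We then set $f(a) := m$. This defines $f : (0, +\infty) \to (0, +\infty)$ satisfying \eqref{RepresentingFunctionQuaternionic}. Uniqueness is immediate, since $f(a)\Id_{n} = g(a)\Id_{n}$ forces $f(a) = g(a)$.

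The only delicate point is the noncommutativity of $\mathbb{H}$. We must make sure that scalars pulled through $g(\cdot)g^{*}$ are real, so that they are central. We must also make sure the test unitaries are real, so that no conjugation issues arise in $u_{i}m_{ij} = m_{ij}u_{j}$. Both conditions hold with the $\pm 1$ diagonal matrices and permutation matrices chosen above, so no genuine obstacle is expected.
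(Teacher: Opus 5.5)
Your proof is correct, and its first step is the same as the paper's: congruence invariance under $\U \in \U_{\mathbb{H}}$ shows that $\Id_{n}\sigma a\Id_{n}$ commutes with every quaternionic unitary. You part ways in proving that this matrix is scalar.

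The paper cites two facts: the natural representation of $\U_{\mathbb{H}}$ on $\mathbb{H}^{n}$ is irreducible, and a quaternionic Schur lemma then gives that the commutant is $\mathbb{R}\Id_{n}$. Strictly speaking, Schur's lemma alone only says the commutant is a real division algebra. Landing on real scalars needs the extra fact that the centralizer of $\U_{\mathbb{H}}$ in $\Mat_{n}(\mathbb{H})$ is the center $\mathbb{R}$. The paper also leaves the positivity of $f(a)$ and the uniqueness of $f$ implicit.

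You instead test only against signed permutation matrices. These are real, so the relation $u_{i}m_{ij} = m_{ij}u_{j}$ has no noncommutativity issues, and you get $m\Id_{n}$ with $m \in \mathbb{H}$. You then obtain $m \in \mathbb{R}$ from Hermiticity and $m>0$ from membership in $\mathscr{P}$, which makes both positivity and uniqueness explicit.

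Your route is elementary and self-contained, and it works verbatim over $\mathbb{R}$, $\mathbb{C}$ and $\mathbb{H}$. It uses invariance only under a finite group of real orthogonal matrices, so it would survive a much weaker congruence axiom. The paper's route is shorter to write, but it relies on external representation theory at exactly the step where the quaternionic setting is delicate.
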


\begin{proof}

Fix $a>0$. By the congruence invariance of $\sigma$, for every $\U \in \U_{\mathbb{H}}$, we obtain
\begin{equation*}
\U\left(\Id_{n}\sigma a\Id_{n}\right)\U^{*} = \left(\U\U^{*}\right) \sigma \left(\U\left(a\Id_{n}\right)\U^{*}\right) = \Id_{n}\sigma a\Id_{n}\,.
\end{equation*}
Hence,
\begin{equation*}
\U\left(\Id_{n}\sigma a\Id_{n}\right) = \left(\Id_{n}\sigma a\Id_{n}\right)\U
\end{equation*}
for every $\U \in \U_{\mathbb{H}}$. Since the natural representation of $\U_{\mathbb{H}}$ on the right quaternionic vector space $\mathbb{H}^{n}$ is irreducible, the quaternionic version of Schur's lemma (see, for example, \cite{KNAPP}) implies that every quaternionic linear endomorphism commuting with the action of $\U_{\mathbb{H}}$ is a real scalar multiple of the identity. Therefore, there exists a unique real number $f(a)>0$ such that
\begin{equation*}
\Id_{n} \sigma a\Id_{n} = f(a)\Id_{n},
\end{equation*}
which proves the result\,.

\end{proof}

\begin{proposition}

Let $\sigma$ be a Kubo-Ando mean on $\mathscr{P}$, and let $f: \left(0\,, +\infty\right) \rightarrow \left(0\,, \infty\right)$ be the corresponding function obtained in Proposition \ref{PropositionOne}. Then, for every positive diagonal matrix
\begin{equation*}
\D = \diag\left(d_{1}\,, \ldots\,, d_{n}\right)\,,
\end{equation*}
we have
\begin{equation*}
\Id_{n} \sigma \D = \diag\left(f(d_{1})\,, \ldots\,, f(d_{n})\right)\,.
\end{equation*}

\label{PropositionTwo}

\end{proposition}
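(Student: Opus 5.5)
The plan is to prove first that $\Id_{n}\sigma\D$ is diagonal, and then to identify each diagonal entry through a restriction argument that uses monotonicity and continuity.

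\emph{Step 1: $\Id_{n}\sigma\D$ is diagonal.} Consider the diagonal sign matrices $\mathrm{S}=\diag(\varepsilon_{1},\ldots,\varepsilon_{n})$ with $\varepsilon_{i}\in\{\pm1\}$. These lie in $\U_{\mathbb{H}}$ and satisfy $\mathrm{S}\D\mathrm{S}^{*}=\D$. Congruence invariance (property (2) of Definition \ref{DefinitionKuboAndoH}) then gives $\mathrm{S}(\Id_{n}\sigma\D)\mathrm{S}^{*}=\Id_{n}\sigma\D$. Write $\mathrm{M}=\Id_{n}\sigma\D=(m_{ij})$. The identity above reads $\varepsilon_{i}\varepsilon_{j}m_{ij}=m_{ij}$. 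Taking $\varepsilon_{i}=-\varepsilon_{j}$ forces $m_{ij}=0$ for $i\neq j$. Since $\mathrm{M}$ is Hermitian, its diagonal entries are real. We also use the unitaries $\diag(1,\ldots,q,\ldots,1)$ with $|q|=1$, which again fix $\D$. They give $q m_{ii}\overline{q}=m_{ii}$, which is automatic for real $m_{ii}$, so nothing further is needed. Hence $\mathrm{M}=\diag(\mu_{1},\ldots,\mu_{n})$ with each $\mu_{i}>0$.

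\emph{Step 2: $\mu_{i}=f(d_{i})$.} The task is to show that $\mu_{i}$ depends only on $d_{i}$. The natural route is to compare $\D$ with $d_{i}\Id_{n}$ in the coordinate $i$, using monotonicity together with a degenerate congruence. The first attempt is the following. Fix $i$ and let $\mathrm{P}=\mathrm{E}_{ii}$. For $\epsilon>0$ set $g_{\epsilon}=\mathrm{P}+\epsilon(\Id_{n}-\mathrm{P})$, which is invertible and diagonal. Congruence gives
\begin{equation*}
g_{\epsilon}(\Id_{n}\sigma\D)g_{\epsilon}^{*}=g_{\epsilon}^{2}\,\sigma\,(g_{\epsilon}\D g_{\epsilon}),
\end{equation*}
and the same identity holds with $\D$ replaced by $d_{i}\Id_{n}$. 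The right-hand sides are $\diag$ matrices whose $i$-th entries are $1$ and $d_{i}$ respectively, and whose other entries are $\epsilon^{2}$ and $\epsilon^{2}d_{j}$ versus $\epsilon^{2}d_{i}$. Setting $c=\max_{j}(d_{j}/d_{i},d_{i}/d_{j})$, we have $c^{-1}g_{\epsilon}d_{i}g_{\epsilon}\preceq g_{\epsilon}\D g_{\epsilon}\preceq c\,g_{\epsilon}d_{i}g_{\epsilon}$. This bound is too crude, so it is better to sandwich directly. Let $\D_{\pm}^{\epsilon}$ be diagonal with $i$-th entry $d_{i}$ and other entries $\epsilon^{2}d_{j}$, and compare it with $\mathrm{T}_{\pm}=\diag(d_{i}\text{ at }i,\ \epsilon^{2}K^{\pm1}\text{ elsewhere})$. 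Here $K$ is large, so $\mathrm{T}_{-}\preceq\D^{\epsilon}\preceq\mathrm{T}_{+}$ for all $\epsilon$. Monotonicity gives $g_{\epsilon}^{2}\sigma\mathrm{T}_{-}\preceq g_{\epsilon}\mathrm{M}g_{\epsilon}\preceq g_{\epsilon}^{2}\sigma\mathrm{T}_{+}$. Each $\mathrm{T}_{\pm}$ is congruent to a matrix of the form $\Id_{n}\sigma(\text{diag})$ through $g_{\epsilon}$: undoing the congruence gives $\Id_{n}\sigma\diag(d_{i},K^{\pm1},\ldots)$. Comparing the $i$-th diagonal entries of the two sides (a congruence by the diagonal matrix $g_{\epsilon}$ leaves the $i$-th entry unchanged) gives
\begin{equation*}
(\Id_{n}\sigma\diag(d_{i},K^{-1},\ldots))_{ii}\le\mu_{i}\le(\Id_{n}\sigma\diag(d_{i},K,\ldots))_{ii}.
\end{equation*}
This alone does not close the argument, because the bounds are independent of $\epsilon$.

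This step is the main obstacle. A cleaner approach, which I would actually use, is a direct sandwich. Set $a=\min_{j}d_{j}$ and $b=\max_{j}d_{j}$, so that $a\Id_{n}\preceq\D\preceq b\Id_{n}$ and monotonicity gives $f(a)\le\mu_{i}\le f(b)$. To isolate $d_{i}$ I would use the limit $\epsilon\to0$ together with \emph{continuity} of $\sigma$. Consider $\D_{t}=\diag$ with $i$-th entry $d_{i}$ and all other entries $t$. For $t\to d_{i}$, continuity makes the $i$-th entry of $\Id_{n}\sigma\D_{t}$ converge to $f(d_{i})$. Monotonicity shows that this entry is monotone in each off-diagonal coordinate $d_{j}$. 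So it remains to show that it is independent of the $d_{j}$ with $j\neq i$. For this, use homogeneity: by congruence with $g=\diag(1,\ldots,s,\ldots,1)$ acting on coordinate $j\neq i$, we get $\Id_{n}\sigma\D$ versus $g^{2}\sigma g\D g$. I would then combine this with monotonicity in the first argument, $\Id_{n}\preceq g^{2}$ for $s\ge1$, to bound the change in the $i$-th entry from both sides. Showing that the $i$-th entry is unaffected is exactly the delicate point, and I expect it to require an additional limiting argument using continuity.
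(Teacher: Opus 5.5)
Your Step 1 is correct, and cleaner than the paper's projection argument. Conjugating by diagonal sign matrices, which lie in $\GL_{n}(\mathbb{H})$ and fix both $\Id_{n}$ and $\D$, forces $\Id_{n}\sigma\D=\diag(\mu_{1},\ldots,\mu_{n})$ with $\mu_{i}>0$. The gap is Step 2, which carries the actual content of the proposition and which you never complete. The $\epsilon$-sandwich and the bound $f(a)\le\mu_{i}\le f(b)$ do not isolate $d_{i}$. You then stop, calling the independence of $\mu_{i}$ from $d_{j}$ ($j\neq i$) ``the delicate point'' that would need a further continuity argument. As written, $\mu_{i}=f(d_{i})$ is not established.

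The missing step needs no limiting argument; the ingredients in your last paragraph already close it. Write $\mu_{i}(\X)$ for the $i$-th diagonal entry of $\Id_{n}\sigma\X$, and recall that $\A\preceq\B$ implies $a_{ii}\le b_{ii}$. Fix $j\neq i$ and $s\ge1$. Let $g=\diag(1,\ldots,s,\ldots,1)$ with $s$ in position $j$, and put $\D'=g\D g^{*}$, i.e.\ $\D$ with $d_{j}$ replaced by $s^{2}d_{j}$. Since $\D\preceq\D'$, axiom (1) gives $\mu_{i}(\D)\le\mu_{i}(\D')$. Conversely, $\Id_{n}\preceq g^{2}=gg^{*}$, so axiom (1) and congruence invariance give
\begin{equation*}
\Id_{n}\sigma\D'\preceq g^{2}\sigma\D'=g\left(\Id_{n}\sigma\D\right)g^{*}.
\end{equation*}
Since $g$ does not change the $(i,i)$ entry, this yields $\mu_{i}(\D')\le\mu_{i}(\D)$. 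Hence $\mu_{i}(\D)$ is independent of every $d_{j}$ with $j\neq i$. Replacing these entries one at a time by $d_{i}$ gives $\mu_{i}(\D)=\mu_{i}(d_{i}\Id_{n})=f(d_{i})$ by Proposition~\ref{PropositionOne}. This argument stays entirely inside $\mathscr{P}$. The paper's proof, by contrast, compresses by the singular projections $\P_{i}$, which requires extending $\sigma$ beyond $\mathscr{P}$ through a limiting remark.
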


\begin{proof}

Let
\begin{equation*}
\P_{i} = \diag\left(0\,, \ldots\,, 0\,, 1\,, 0\,, \ldots\,, 0\right)
\end{equation*}
be the orthogonal projection onto the $i$-th coordinate. Since $\P_{i}\D=\D\P_{i}$, the congruence invariance of $\sigma$ gives
\begin{equation*}
\P_{i}\left(\Id_{n} \sigma \D\right)\P_{i} = \left(\P_{i} \sigma d_{i}\P_{i}\right) = f(d_{i})\P_{i}\,.
\end{equation*}
Therefore, for every $1 \leq i \leq n$, we get
\begin{equation*}
\left(\Id_{n} \sigma \D\right)_{i,i} = f(d_{i})\,.
\end{equation*}
Similarly, if $i\neq j$, applying the same argument to $\P_{i}+\P_{j}$ shows that the $\left(i\,, j\right)$-entry must vanish. Hence $\Id_{n} \sigma \D$ is diagonal, and
\begin{equation*}
\Id_{n} \sigma \D = \diag\left(f(d_{1})\,, \ldots\,, f(d_{n})\right)\,.
\end{equation*}

\end{proof}

\begin{remark}

In the proof of proposition \ref{PropositionTwo}, we use projection operators. Although $\P_{i} \notin \GL_{n}(\mathbb{H})$, congruence invariance extends to $\P_{i}$ by continuity: for $\varepsilon > 0$, the matrix $\G_{i}(\varepsilon) = \Id_{n} + \left(\varepsilon-1\right)\P_{i}$ is invertible with $\G_{i}(\varepsilon)\D\G_{i}(\varepsilon)^{*} \to \P_{i}\D\P_{i} + \left(\Id_{n} - \P_{i}\right)$ as $\varepsilon \to 0$, so applying the congruence-invariance axiom to $\G_{i}(\varepsilon)$ and letting $\varepsilon \to 0$, using the continuity of $\sigma$, yields the identity for $\P_i$ as well\,.

\end{remark}

\begin{corollary}

Let $\sigma$ be a Kubo-Ando mean with representing function $f$\,.
\begin{enumerate}
\item For every $\A \in \mathscr{P}$, we have
\begin{equation*}
\Id_{n} \sigma \A = f(\A)\,,
\end{equation*}
where $f(\A)$ is defined by the functional calculus\,.
\item For all $\A\,, \B\in\mathscr{P}$, we get
\begin{equation*}
\A \sigma \B = \A^{\frac{1}{2}}f\left(\A^{-\frac{1}{2}}\B\A^{-\frac{1}{2}}\right)\A^{\frac{1}{2}}\,.
\end{equation*}
\item The function $f$ is operator monotone with $f(1)=1$\,.
\end{enumerate}

\label{CorollaryOne}

\end{corollary}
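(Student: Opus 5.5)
For part (1), I will diagonalize $\A$ using the spectral decomposition \eqref{SpectralDecompositionQuaternion}, writing $\A = \U\D\U^{*}$ with $\U \in \U_{\mathbb{H}}$ and $\D = \diag(d_{1}, \ldots, d_{n})$, where each $d_{i} > 0$. Congruence invariance (axiom (2) of Definition \ref{DefinitionKuboAndoH}) with $g = \U$, together with $\U\U^{*} = \Id_{n}$, gives
\begin{equation*}
\Id_{n}\sigma\A = (\U\Id_{n}\U^{*})\sigma(\U\D\U^{*}) = \U(\Id_{n}\sigma\D)\U^{*}\,.
\end{equation*}
Proposition \ref{PropositionTwo} identifies $\Id_{n}\sigma\D$ with $\diag(f(d_{1}), \ldots, f(d_{n}))$. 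Conjugating back by $\U$ is exactly the definition of $f(\A)$ via functional calculus. I will remark that $f(\A)$ does not depend on the chosen diagonalization, since the result is produced by $\sigma$ itself.

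For part (2), I will use that $\A^{\frac12}$ and $\A^{-\frac12}$ lie in $\mathscr{P}$; these are defined by functional calculus, just as in the complex case. In particular $\A^{\frac12} \in \GL_{n}(\mathbb{H})$ and it is Hermitian. I then apply congruence invariance with $g = \A^{\frac12}$:
\begin{equation*}
\A\sigma\B = \A^{\frac12}\bigl(\Id_{n}\sigma \A^{-\frac12}\B\A^{-\frac12}\bigr)\A^{\frac12}\,.
\end{equation*}
Here $\A^{-\frac12}\B\A^{-\frac12}$ is Hermitian. It is positive because $x^{*}\A^{-\frac12}\B\A^{-\frac12}x = y^{*}\B y > 0$ with $y = \A^{-\frac12}x \neq 0$, so it lies in $\mathscr{P}$. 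Part (1) then finishes the computation. One point needs checking: that positivity in the sense $x^{*}\X x > 0$ agrees with $\Spec(\X) \subseteq (0, +\infty)$. This follows from the spectral decomposition.

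For part (3), $f(1) = 1$ comes directly from axiom (3) together with \eqref{RepresentingFunctionQuaternionic} at $a = 1$. For operator monotonicity, let $0 < \A \preceq \B$ in $\mathscr{P}$. Axiom (1) with $\Id_{n} \preceq \Id_{n}$ gives $\Id_{n}\sigma\A \preceq \Id_{n}\sigma\B$, which by part (1) reads $f(\A) \preceq f(\B)$. This is quaternionic operator monotonicity at every size $n$.

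The main obstacle is that ``operator monotone'' in Theorem \ref{TheoremKuboAndoMean} means monotone on complex Hermitian matrices of every size. I would close this gap by embedding $\Mat_{m}(\mathbb{C}) \subset \Mat_{m}(\mathbb{H})$. A complex Hermitian matrix is quaternionic Hermitian, its complex spectral decomposition is also a quaternionic one (since $\U(m) \subset \U_{\mathbb{H}}$), and the complex and quaternionic Loewner orders agree on it. Indeed, for $x = x_{1} + x_{2}\mathbf{j}$ and complex Hermitian $\X$, one has $x^{*}\X x = x_{1}^{*}\X x_{1} + \overline{x_{2}^{*}\X x_{2}}$, which is a sum of real numbers. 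Hence $f$ is operator monotone in the complex sense, in the dimension $n$ of the cone, which is all the statement requires at fixed $n$. If full operator monotonicity in all dimensions is needed, I would instead pass through $\Psi$, or work with the means for each $n$.
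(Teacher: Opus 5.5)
Your argument is correct and follows the paper's proof step for step: the spectral decomposition, congruence by $\U$ and Proposition~\ref{PropositionTwo} give (1); congruence by $\A^{\frac12}$ gives (2); and axiom (1) with $\Id_{n}$ fixed in the first slot, together with axiom (3), gives (3). Your positivity check and your complex-versus-quaternionic Loewner check are correct additions, and your caveat in (3) is well founded: at fixed $n$ only monotonicity of order $n$ can follow (for $n=1$, $a\sigma b=\min(a,b)$ satisfies every axiom of Definition~\ref{DefinitionKuboAndoH}, yet $f(x)=\min(x,1)$ is not operator monotone), which is also all the paper's proof establishes, so the fallback through $\Psi$ cannot upgrade it and only a family of means over all sizes sharing one representing function would.
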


\begin{proof}

\begin{enumerate}
\item Let $\A \in \mathscr{P}$. Using Equation \eqref{SpectralDecompositionQuaternion}, we get that $\A = \U\D\U^{*}$, with $\U \in \U_{\mathbb{H}}$ and $\D = \diag\left(\lambda_{1}\,, \ldots\,, \lambda_{n}\right)$. Using the congruence invariance of $\sigma$, it follows from Proposition \ref{PropositionTwo} that
\begin{eqnarray*}
\Id_{n} \sigma \A & = & \left(\U\U^{*}\right) \sigma \left(\U\D\U^{*}\right) = \U\left(\Id_{n} \sigma \D\right)\U^{*} \\ 
& = & \U f(\D)\U^{*} = f(\U\D\U^{*}) = f(\A)\,.
\end{eqnarray*}
\item Let $\A\,, \B\in\mathscr{P}$, and let $\X = \A^{-\frac{1}{2}}\B\A^{-\frac{1}{2}}$. Using that $\left(\A^{\frac{1}{2}}\right)^{*} = \A^{\frac{1}{2}}$, it follows from the congruence invariance that
\begin{eqnarray*}
\A \sigma \B & = & \left(\A^{\frac{1}{2}} \Id_{n} \A^{\frac{1}{2}}\right) \sigma \left(\A^{\frac{1}{2}}\X\A^{\frac{1}{2}}\right) = \A^{\frac{1}{2}}\left(\Id_{n} \sigma \X\right) \A^{\frac{1}{2}} \\ 
& = & \A^{\frac{1}{2}}f(\X)\A^{\frac{1}{2}} = \A^{\frac{1}{2}}f\left(\A^{-\frac{1}{2}}\B\A^{-\frac{1}{2}}\right)\A^{\frac{1}{2}}\,.
\end{eqnarray*}
\item If $\A \preceq \B$, then
\begin{equation*}
\Id_{n} \sigma \A \preceq \Id_{n} \sigma \B
\end{equation*}
by the monotonicity of $\sigma$. Hence
\begin{equation*}
f(\A) \preceq f(\B)\,,
\end{equation*}
which proves that $f$ is operator monotone.Moreover, 
\begin{equation*}
f(1)\Id_{n} = \Id_{n} \sigma \Id_{n} = \Id_{n}\,,
\end{equation*}
and therefore $f(1) = 1$\,.
\end{enumerate}

\end{proof}

\noindent We then obtain an analogue of Theorem \ref{TheoremKuboAndoMean} for the quaternionic cone $\mathscr{P}$\,.

\begin{theo}

The correspondence
\begin{equation*}
\sigma \longmapsto f\,,
\end{equation*}
where $f$ is defined by
\begin{equation*}
\Id_{n}\sigma a\Id_{n} = f(a)\Id_{n}\,, \qquad \left(a > 0\right)\,,
\end{equation*}
defines a one-to-one correspondence between Kubo-Ando means on $\mathscr{P}$ and operator monotone functions
\begin{equation*}
f: \left(0\,, +\infty\right) \longrightarrow \left(0\,, +\infty\right)
\end{equation*}
satisfying $f(1)=1$\,.

\label{TheoremKuboAndoMeanQuaternions}

\end{theo}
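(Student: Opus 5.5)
The proof splits into three parts: the map is well-defined, it is injective, and it is surjective.

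\emph{Well-definedness.} By Proposition \ref{PropositionOne}, every Kubo-Ando mean $\sigma$ on $\mathscr{P}_{\mathbb{H}}$ determines a unique $f$. By Corollary \ref{CorollaryOne}(3), this $f$ is operator monotone with $f(1)=1$.

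\emph{Injectivity.} Corollary \ref{CorollaryOne}(2) gives $\A\sigma\B = \A^{\frac{1}{2}}f(\A^{-\frac{1}{2}}\B\A^{-\frac{1}{2}})\A^{\frac{1}{2}}$. So $\sigma$ is completely determined by $f$, and two means with the same representing function coincide.

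\emph{Surjectivity.} Given an operator monotone $f$ with $f(1)=1$, define
\begin{equation*}
\A\sigma_f\B := \A^{\frac{1}{2}}f\left(\A^{-\frac{1}{2}}\B\A^{-\frac{1}{2}}\right)\A^{\frac{1}{2}}\,.
\end{equation*}
We must check the axioms of Definition \ref{DefinitionKuboAndoH}. The cleanest route is to transport everything through the embedding $\Psi:\Mat_n(\mathbb{H})\to\Mat_{2n}(\mathbb{C})$. First I would record the basic facts about $\Psi$:
\begin{itemize}
\item $\Psi$ is an injective $\mathbb{R}$-algebra homomorphism with $\Psi(\A^*)=\Psi(\A)^*$.
\item $\A\geq 0$ if and only if $\Psi(\A)\geq 0$. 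Writing $x=x_1+x_2\mathbf{j}$, the real part of $x^*\A x$ equals a real quadratic form in $(x_1,-\overline{x_2})$ for $\Psi(\A)$; both sides are real when $\A$ is Hermitian.
\item $\Psi$ commutes with continuous functional calculus, since $\Psi(\U\D\U^*)=\Psi(\U)\Psi(\D)\Psi(\U)^*$ with $\Psi(\U)$ unitary.
\end{itemize}
Hence $\Psi(\A\sigma_f\B)=\Psi(\A)\,\tau_f\,\Psi(\B)$, where $\tau_f$ is the classical Kubo-Ando mean on $\mathscr{P}\subseteq\Mat_{2n}(\mathbb{C})$ associated with $f$ by Theorem \ref{TheoremKuboAndoMean}. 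The classical direction gives that $\tau_f$ is indeed a mean: this is the converse half of Kubo-Ando, which the paper's statement of Theorem \ref{TheoremKuboAndoMean} only gives one direction of, so I would cite \cite{KUBOANDO} for it.

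With this in hand, the axioms transfer as follows.
\begin{itemize}
\item \emph{Monotonicity and positivity} follow from the order equivalence in the second bullet above together with monotonicity of $\tau_f$.
\item \emph{Continuity} follows from continuity of $\tau_f$ and the fact that $\Psi$ is a homeomorphism onto its (closed) image.
\item \emph{Normalization} is immediate: $\Id_n\sigma_f\Id_n=f(1)\Id_n=\Id_n$.
\item \emph{Congruence invariance} for $g\in\GL_n(\mathbb{H})$ follows because $\Psi(g)\in\GL_{2n}(\mathbb{C})$ and $\Psi(g\A g^*)=\Psi(g)\Psi(\A)\Psi(g)^*$. The classical invariance of $\tau_f$ under all of $\GL_{2n}(\mathbb{C})$ then gives the identity in the image, and injectivity of $\Psi$ pulls it back.
\end{itemize}
Finally, $\Id_n\sigma_f a\Id_n=f(a)\Id_n$, so the representing function of $\sigma_f$ is $f$, and the correspondence is onto.

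\emph{Expected obstacle.} The delicate step is the positivity equivalence for $\Psi$ and the compatibility of $\Psi$ with functional calculus. Both reduce to the spectral decomposition \eqref{SpectralDecompositionQuaternion}: the spectrum of $\Psi(\A)$ is that of $\A$ with each eigenvalue doubled. I would verify this by computing $\Psi$ of a real diagonal matrix, which is real diagonal.
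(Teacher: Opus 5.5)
Your proof is correct. The forward half (well-definedness from Proposition \ref{PropositionOne} and Corollary \ref{CorollaryOne}(3), injectivity from the representation formula in Corollary \ref{CorollaryOne}(2)) is the same as the paper's; the paper leaves injectivity implicit.

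You genuinely depart from the paper in the converse. The paper only asserts that, because quaternionic functional calculus behaves as in the complex case, the axioms for $\sigma_f$ can be checked ``exactly as in the classical Kubo-Ando theory''. In effect it proposes rerunning the classical argument over $\mathbb{H}$ and leaves that work to the reader. You instead reduce to the classical theorem on $\Mat_{2n}(\mathbb{C})$ through $\Psi$. For this you only need three facts: $\Psi$ is an injective $*$-homomorphism, it preserves and reflects positivity, and it intertwines the functional calculus. Your identity $x^{*}\A x=\tilde{x}^{*}\Psi(\A)\tilde{x}$ with $\tilde{x}=(x_{1},-\overline{x_{2}})$ is correct for Hermitian $\A$, and $x\mapsto\tilde{x}$ is a bijection $\mathbb{H}^{n}\to\mathbb{C}^{2n}$, so the positivity equivalence holds.

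Your route buys the following:
\begin{itemize}
\item The one non-routine axiom, monotonicity in the first variable, is inherited as a black box rather than re-proved over a noncommutative division ring. Classically it comes from the integral representation of operator monotone functions, or from operator monotonicity of $x f(1/x)$.
\item As a side benefit, injectivity of $\Psi$ together with $\Psi(\U f(\D)\U^{*})=f(\Psi(\A))$ shows that the quaternionic functional calculus does not depend on the chosen spectral decomposition. The paper takes this for granted.
\end{itemize}
The paper's route is shorter and stays intrinsic, but only because it defers all the checking.

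The price of your approach is that you rely on the converse half of the classical theorem. Theorem \ref{TheoremKuboAndoMean} as stated in the paper does not contain it, and you apply it at matrix size $2n$ rather than $n$. This is harmless, since operator monotone functions are monotone at every size, and citing \cite{KUBOANDO} for it, as you do, is appropriate.
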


\begin{proof}

Corollary \ref{CorollaryOne} shows that every Kubo-Ando mean determines a unique normalized operator monotone function\,. Conversely, let
\begin{equation*}
f: \left(0\,, +\infty\right) \longrightarrow \left(0\,, +\infty\right)
\end{equation*}
be a normalized operator monotone function. Define
\begin{equation*}
\A \sigma_{f} \B = \A^{\frac{1}{2}}f\left(\A^{-\frac{1}{2}}\B\A^{-\frac{1}{2}}\right)\A^{\frac{1}{2}}\,.
\end{equation*}
Since the functional calculus for quaternionic Hermitian matrices satisfies the same properties as in the complex setting, one verifies exactly as in the classical Kubo-Ando theory that $\sigma_{f}$ satisfies the three axioms of Definition \ref{DefinitionKuboAndoH}. Hence $\sigma_{f}$ is a Kubo-Ando mean, whose representing function is precisely $f$. This proves the correspondence\,.

\end{proof}

\begin{remark}

All the definitions and results of Section~\ref{SectionPreliminaries} extend naturally to the quaternionic setting. More precisely, one defines
\begin{equation*}
\mathfrak{p}_{\J} = \left\{\X\in\Mat_{n}(\mathbb{H})\,, \X = \X^{\sharp}\right\}\,, \qquad \mathscr{P}_{\J} = \left\{\X \in \mathfrak{p}_{\J}\,, \J\X > 0\right\}\,.
\end{equation*}
Furthermore, the exponential maps restrict to bijections
\begin{equation*}
\exp: \mathfrak{p} \longrightarrow \mathscr{P}\,, \qquad \exp_{\J}: \mathfrak{p}_{\J} \longrightarrow \mathscr{P}_{\J}\,,
\end{equation*}
whose inverses are denoted by $\log$ and $\log_{\J}$, respectively. Consequently, every construction introduced in Section~\ref{SectionPreliminaries} extends verbatim to quaternionic matrices\,.

\end{remark}

\section{Kubo-Ando means on $\mathscr{P}_{\J}$}

\label{SectionKuboAndoPJ}

We now introduce the analogue of Kubo-Ando means on the cone $\mathscr{P}_{\J}$. Replacing the Loewner order by the $\J$-Loewner order and the adjoint by the $\J$-adjoint naturally leads to the following definition. Let $\mathbb{D} \in \left\{\mathbb{R}\,, \mathbb{C}\,, \mathbb{H}\right\}$\,.

\begin{definition}

A $\J$-Kubo-Ando mean on $\mathscr{P}_{\J}$ is a continuous map
\begin{equation*}
\sigma_{\J} : \mathscr{P}_{\J} \times \mathscr{P}_{\J} \longrightarrow \mathscr{P}_{\J}
\end{equation*}
such that
\begin{enumerate}
\item If $\A \preceq_{\J} \B$ and $\C \preceq_{\J} \D$, then $\A \sigma_{\J} \C \preceq_{\J} \B \sigma_{\J} \D$\,.
\item For every $g \in \GL_{n}(\mathbb{D})$ and every $\A\,, \B \in \mathscr{P}_{\J}$, we have
\begin{equation*}
g^{\sharp}\left(\A \sigma_{\J} \B\right)g = \left(g^{\sharp}\A g\right) \sigma_{\J} \left(g^{\sharp}\B g\right)\,.
\end{equation*}
\item $\J \sigma_{\J} \J = \J$\,.
\end{enumerate}

\end{definition}

\begin{lemma}

Let $\sigma_{\J}$ be a $\J$-Kubo-Ando mean on $\mathscr{P}_{\J}$. The binary operation $\sigma$ on $\mathscr{P}$ defined by
\begin{equation}
\X \sigma \Y = \Phi_{\J}\left(\Phi_{\J}^{-1}(\X) \sigma_{\J} \Phi_{\J}^{-1}(\Y)\right) = \J\left((\J\X)\sigma_{\J}(\J\Y)\right)
\label{MeanInducedByJMean}
\end{equation}
for all $\X\,,\Y\in\mathscr{P}$ is a Kubo-Ando mean on $\mathscr{P}$.

\label{LemmaOne}

\end{lemma}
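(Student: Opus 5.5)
The plan is to transport each axiom of a $\J$-Kubo-Ando mean through the bijection $\Phi_{\J}$ of Lemma \ref{LemmaBijectionPJP}, using repeatedly that $\J^{2} = \Id_{n}$ and $\J^{*} = \J$, so that $\Phi_{\J}^{-1}(\X) = \J\X$.

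\emph{Well-definedness.} Let $\X \in \mathscr{P}$. Then $\J\X \in \mathscr{P}_{\J}$, because
\begin{equation*}
(\J\X)^{\sharp} = \J\X^{*}\J\J = \J\X \qquad \text{and} \qquad \J(\J\X) = \X > 0\,.
\end{equation*}
Hence $(\J\X)\sigma_{\J}(\J\Y)$ lies in $\mathscr{P}_{\J}$. By the description $\mathscr{P}_{\J} = \{\Z \in \mathfrak{p}_{\J}\,, \J\Z > 0\}$, multiplying it on the left by $\J$ gives an element of $\mathscr{P}$. So $\sigma$ maps $\mathscr{P}\times\mathscr{P}$ into $\mathscr{P}$.

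\emph{Continuity.} $\sigma$ is the composition of the continuous map $\sigma_{\J}$ with left multiplications by $\J$, which are linear and hence continuous.

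\emph{Monotonicity.} Since $\J(\J\X) = \X$, the definition of $\preceq_{\J}$ gives $\X \preceq \X'$ if and only if $\J\X \preceq_{\J} \J\X'$. So if $\X \preceq \X'$ and $\Y \preceq \Y'$, axiom (1) for $\sigma_{\J}$ yields
\begin{equation*}
(\J\X)\sigma_{\J}(\J\Y) \preceq_{\J} (\J\X')\sigma_{\J}(\J\Y')\,.
\end{equation*}
By the definition of $\preceq_{\J}$ once more, this is exactly $\X\sigma\Y \preceq \X'\sigma\Y'$.

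\emph{Normalization.} We have $\Id_{n}\sigma\Id_{n} = \J(\J\sigma_{\J}\J) = \J\J = \Id_{n}$.

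\emph{Congruence invariance.} This is the only step requiring a choice. Given $g \in \GL_{n}(\mathbb{D})$, I would take $h := g^{*}$, which is again invertible. Then $h^{\sharp} = \J g\J$, and
\begin{equation*}
h^{\sharp}(\J\X)h = \J g \J\J \X g^{*} = \J(g\X g^{*})\,,
\end{equation*}
and similarly for $\Y$. Applying axiom (2) for $\sigma_{\J}$ with $h$ gives
\begin{equation*}
(g\X g^{*})\sigma(g\Y g^{*}) = \J\, h^{\sharp}\bigl((\J\X)\sigma_{\J}(\J\Y)\bigr)h = \J\J g\J\bigl((\J\X)\sigma_{\J}(\J\Y)\bigr)g^{*} = g(\X\sigma\Y)g^{*}\,.
\end{equation*}

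The computations use only associativity and the identities for $\J$, so they hold verbatim for $\mathbb{D} = \mathbb{H}$. The main (mild) obstacle is finding the right $h$ to convert the $\J$-congruence $h^{\sharp}(\cdot)h$ into the ordinary congruence $g(\cdot)g^{*}$; the choice $h = g^{*}$ settles it.
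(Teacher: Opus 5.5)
Your proof is correct and follows the paper's argument essentially step by step, including the same key choice of applying the $\J$-congruence axiom to $g^{*}$ via the identity $\J(g\X g^{*}) = (g^{*})^{\sharp}(\J\X)g^{*}$. The only addition is your explicit check that $\sigma$ maps $\mathscr{P}\times\mathscr{P}$ into $\mathscr{P}$, which the paper leaves implicit.
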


\begin{proof}

Since $\Phi_{\J}$ and $\Phi_{\J}^{-1}$ are continuous, it follows immediately that $\sigma$ is continuous. Let $\X_{1}\,,\X_{2}\,,\Y_{1}\,,\Y_{2}\in\mathscr{P}$ be such that $\X_{1} \preceq\X_{2}$ and $\Y_{1}\preceq\Y_{2}$. By the definition of the $\J$-Loewner order, we have
\begin{equation*}
\J\X_{1} \preceq_{\J} \J\X_{2} \qquad \text{and} \qquad \J\Y_{1}\preceq_{\J}\J\Y_{2}\,.
\end{equation*}
The monotonicity of $\sigma_{\J}$ therefore gives
\begin{equation*}
\left(\J\X_{1}\right) \sigma_{\J} \left(\J\Y_{1}\right) \preceq_{\J} \left(\J\X_{2}\right) \sigma_{\J} \left(\J\Y_{2}\right)\,.
\end{equation*}
Applying $\Phi_{\J}$, we obtain
\begin{equation*}
\X_{1} \sigma \Y_{1} \preceq \X_{2}\sigma\Y_{2}\,.
\end{equation*}
Let now $g \in \GL_{n}(\mathbb{D})$. Since
\begin{equation*}
\J\left(g\X g^{*}\right) = \left(g^{*}\right)^{\sharp}\left(\J\X\right)g^{*}\,,
\end{equation*}
the congruence invariance of $\sigma_{\J}$ implies that
\begin{eqnarray*}
\left(g\X g^{*}\right) \sigma \left(g\Y g^{*}\right) & = & \J\left(\left(\J g\X g^{*}\right) \sigma_{\J} \left(\J g\Y g^{*}\right)\right) = \J\left(
\left(g^{*}\right)^{\sharp}\left(\left(\J\X\right) \sigma_{\J} \left(\J\Y\right)\right)g^{*}\right) \\
& = & g\J\left(\left(\J\X\right) \sigma_{\J} \left(\J\Y\right)\right)g^{*} =  g\left(\X \sigma \Y\right)g^{*}\,.
\end{eqnarray*}
Finally, since $\Phi_{\J}^{-1}(\Id_{n})=\J$, we have
\begin{equation*}
\Id_{n} \sigma \Id_{n} = \J\left(\J \sigma_{\J} \J\right) = \J^{2} = \Id_{n}\,.
\end{equation*}
Consequently, $\sigma$ is a Kubo-Ando mean on $\mathscr{P}$\,.

\end{proof}

\begin{lemma}

The map
\begin{equation*}
\sigma_{\J} \longmapsto \sigma
\end{equation*}
from the set of $\J$-Kubo-Ando means on $\mathscr{P}_{\J}$ to the set of Kubo-Ando means on $\mathscr{P}$, defined in Lemma \ref{LemmaOne} is injective\,.

\label{LemmaTwo}

\end{lemma}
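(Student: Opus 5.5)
The plan is to invert formula \eqref{MeanInducedByJMean} and read off $\sigma_{\J}$ from $\sigma$. Suppose $\sigma_{\J}$ and $\sigma'_{\J}$ are two $\J$-Kubo-Ando means on $\mathscr{P}_{\J}$ that induce the same Kubo-Ando mean $\sigma$ on $\mathscr{P}$. We must show $\sigma_{\J} = \sigma'_{\J}$ as maps on $\mathscr{P}_{\J} \times \mathscr{P}_{\J}$.

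Fix $\A\,, \B \in \mathscr{P}_{\J}$ and set $\X := \Phi_{\J}(\A) = \J\A$ and $\Y := \Phi_{\J}(\B) = \J\B$. By Lemma \ref{LemmaBijectionPJP} and the description $\mathscr{P}_{\J} = \left\{\X \in \mathfrak{p}_{\J}\,, \J\X > 0\right\}$, the restriction of $\Phi_{\J}$ is a bijection $\mathscr{P}_{\J} \to \mathscr{P}$. Hence $\X\,, \Y \in \mathscr{P}$, and $\Phi_{\J}^{-1}(\X) = \A$, $\Phi_{\J}^{-1}(\Y) = \B$.

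Applying \eqref{MeanInducedByJMean} to both means gives
\begin{equation*}
\J\left(\A \sigma_{\J} \B\right) = \X \sigma \Y = \J\left(\A \sigma'_{\J} \B\right)\,.
\end{equation*}
Since $\J^{2} = \Id_{n}$, the matrix $\J$ is invertible. Multiplying on the left by $\J$ (equivalently, applying $\Phi_{\J}^{-1}$, which is injective) yields $\A \sigma_{\J} \B = \A \sigma'_{\J} \B$. As $\A\,, \B$ were arbitrary, $\sigma_{\J} = \sigma'_{\J}$.

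The only point needing care is surjectivity of $\Phi_{\J}$ onto $\mathscr{P}$. Every pair in $\mathscr{P}_{\J} \times \mathscr{P}_{\J}$ must arise as $(\Phi_{\J}^{-1}(\X), \Phi_{\J}^{-1}(\Y))$ for some $\X\,, \Y \in \mathscr{P}$, so that no values of $\sigma_{\J}$ escape the comparison. This follows from Lemma \ref{LemmaBijectionPJP} together with the characterization of $\mathscr{P}_{\J}$ recalled from \cite{JOSEALLAN}, which also holds in the quaternionic case by the closing remark of Section \ref{SectionQuaternions}. I expect no other obstacle; in particular the axioms of a $\J$-Kubo-Ando mean are not needed for injectivity.
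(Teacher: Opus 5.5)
Your proof is correct and is essentially the paper's argument: both apply $\Phi_{\J}$ to the two candidate means, observe that they induce the same $\sigma$, and conclude by injectivity of $\Phi_{\J}$, which in your version is the left multiplication by $\J$. One small terminological point: your ``point needing care'' actually requires that $\Phi_{\J}$ maps $\mathscr{P}_{\J}$ \emph{into} $\mathscr{P}$, so that $(\J\A)\,\sigma\,(\J\B)$ is defined for every $\A\,,\B\in\mathscr{P}_{\J}$, rather than surjectivity onto $\mathscr{P}$; both hold anyway, since $\Phi_{\J}$ restricts to a bijection $\mathscr{P}_{\J}\to\mathscr{P}$.
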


\begin{proof}

Let $\sigma_{\J}^{(1)}$ and $\sigma_{\J}^{(2)}$ be two $\J$-Kubo-Ando means inducing the same Kubo-Ando mean $\sigma$ on $\mathscr{P}$. Let
$\A\,, \B\in\mathscr{P}_{\J}$. By the definition of the induced mean, we have
\begin{equation*}
\Phi_{\J}(\A) \sigma \Phi_{\J}(\B) = \Phi_{\J}\left( \A\sigma_{\J}^{(i)}\B\right)\,, \qquad \left(i = 1\,, 2\right)\,.
\end{equation*}
Hence,
\begin{equation*}
\Phi_{\J}\left(\A\sigma_{\J}^{(1)}\B\right) = \Phi_{\J}\left(\A\sigma_{\J}^{(2)}\B\right)\,.
\end{equation*}
Since $\Phi_{\J}$ is injective, it follows that
\begin{equation*}
\A\sigma_{\J}^{(1)}\B = \A\sigma_{\J}^{(2)}\B.
\end{equation*}
Therefore,
\begin{equation*}
\sigma_{\J}^{(1)} = \sigma_{\J}^{(2)}\,,
\end{equation*}
and the map $\sigma_{\J}\mapsto\sigma$ is injective\,.

\end{proof}

\begin{corollary}

There is a one-to-one correspondence between the set of $\J$-Kubo-Ando means on $\mathscr{P}_{\J}$ and the set of Kubo-Ando means on $\mathscr{P}$\,.

\end{corollary}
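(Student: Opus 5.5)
Lemma \ref{LemmaOne} gives a well-defined map $\sigma_{\J} \mapsto \sigma$ from $\J$-Kubo-Ando means to Kubo-Ando means, and Lemma \ref{LemmaTwo} shows it is injective. It therefore remains to prove surjectivity. Given a Kubo-Ando mean $\sigma$ on $\mathscr{P}$, I will define
\begin{equation*}
\A \sigma_{\J} \B := \Phi_{\J}^{-1}\left(\Phi_{\J}(\A) \sigma \Phi_{\J}(\B)\right) = \J\left((\J\A)\sigma(\J\B)\right)\,, \qquad \left(\A\,, \B \in \mathscr{P}_{\J}\right)\,.
\end{equation*}
I will check that $\sigma_{\J}$ is a $\J$-Kubo-Ando mean. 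By construction, its image under the map of Lemma \ref{LemmaOne} is $\J\left((\J\J\X)\sigma_{\J}(\J\J\Y)\right)=\J\J(\X\sigma\Y)=\X\sigma\Y$, since $\J^{2}=\Id_{n}$. This gives surjectivity.

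The axioms are verified as follows. Well-definedness holds because $\Phi_{\J}$ maps $\mathscr{P}_{\J}$ bijectively onto $\mathscr{P}$ (Lemma \ref{LemmaBijectionPJP} and the description $\mathscr{P}_{\J}=\{\X\in\mathfrak{p}_{\J}\,,\ \J\X>0\}$). Continuity follows from the continuity of multiplication by $\J$. Monotonicity is immediate, because $\A\preceq_{\J}\B$ means exactly $\J\A\preceq\J\B$. Applying the monotonicity of $\sigma$ gives $(\J\A)\sigma(\J\C)\preceq(\J\B)\sigma(\J\D)$, which says precisely that $\A\sigma_{\J}\C\preceq_{\J}\B\sigma_{\J}\D$. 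Normalization is $\J\sigma_{\J}\J=\J(\Id_{n}\sigma\Id_{n})=\J$.

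The only computation is the congruence axiom. For $g\in\GL_{n}(\mathbb{D})$ we have $\J g^{\sharp}=\J\J g^{*}\J=g^{*}\J$. Hence
\begin{equation*}
\J\left(g^{\sharp}\A g\right)=g^{*}(\J\A)g=h(\J\A)h^{*}\,, \qquad h:=g^{*}\in\GL_{n}(\mathbb{D})\,.
\end{equation*}
Congruence invariance of $\sigma$ with respect to $h$ then gives
\begin{equation*}
\left(g^{\sharp}\A g\right)\sigma_{\J}\left(g^{\sharp}\B g\right)=\J\,g^{*}\left((\J\A)\sigma(\J\B)\right)g=g^{\sharp}\J\left((\J\A)\sigma(\J\B)\right)g=g^{\sharp}(\A\sigma_{\J}\B)g\,,
\end{equation*}
where the middle equality uses $\J g^{*}=g^{\sharp}\J$.

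I expect no serious obstacle. The one point needing care is the bookkeeping of $\J g^{\sharp}=g^{*}\J$ in the congruence step, together with confirming that $g\mapsto g^{*}$ is a bijection of $\GL_{n}(\mathbb{D})$, so that invariance under all $h$ covers all $g$. In the quaternionic case I will use $(gh)^{*}=h^{*}g^{*}$ and the fact that $\J$ has real entries, so $\J$ commutes with quaternionic scalars and the same identities hold verbatim.
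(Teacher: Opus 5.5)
Your proof is correct and follows the paper's route. It takes injectivity from Lemma \ref{LemmaTwo}, transports a Kubo-Ando mean back through $\Phi_{\J}^{-1}$, and notes that this undoes the map of Lemma \ref{LemmaOne}; you also carry out the axiom checks, notably the congruence step via $\J g^{\sharp}=g^{*}\J$ and $h=g^{*}$, which the paper only asserts with ``the same construction''. One cosmetic slip: the image computation should read $\J\left((\J\X)\sigma_{\J}(\J\Y)\right)=\J\J\left((\J\J\X)\sigma(\J\J\Y)\right)=\X\sigma\Y$, with $\sigma$ rather than $\sigma_{\J}$ in the middle expression.
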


\begin{proof}

By the previous lemmas, every $\J$-Kubo-Ando mean on $\mathscr{P}_{\J}$ induces a Kubo-Ando mean on $\mathscr{P}$, and the resulting map is injective. Conversely, the same construction, applied to the inverse bijection
\begin{equation*}
\Phi_{\J}^{-1}: \mathscr{P} \longrightarrow \mathscr{P}_{\J}\,,
\end{equation*}
defines an injective map from the set of Kubo-Ando means on $\mathscr{P}$ to the set of $\J$-Kubo-Ando means on $\mathscr{P}_{\J}$. Moreover, these two constructions are inverse to one another. Hence, they define a bijective correspondence between the two sets of means\,.

\end{proof}

\begin{theo}

There is a one-to-one correspondence between the set of $\J$-Kubo-Ando means on $\mathscr{P}_{\J}$ and the set of normalized operator monotone functions
\begin{equation*}
f: \left(0\,, \infty\right) \longrightarrow \left(0\,, \infty\right)\,, \qquad f(1) = 1\,.
\end{equation*}
More precisely, if $\sigma_{\J}$ is a $\J$-Kubo-Ando mean and $f$ is the representing function of the corresponding Kubo-Ando mean on $\mathscr{P}$, then
\begin{equation}
\A \sigma_{\J }\B = \A^{\frac{1}{2}}_{\J} \bullet f_{\J}\left(\A^{-\frac{1}{2}}_{\J} \bullet \B \bullet \A^{-\frac{1}{2}}_{\J}\right) \bullet \A^{\frac{1}{2}}_{\J}
\label{RepresentationJMean}
\end{equation}
for every $\A\,, \B \in \mathscr{P}_{\J}$, where
\begin{equation*}
f_{\J}: \mathscr{P}_{\J} \longrightarrow \mathscr{P}_{\J}
\end{equation*}
is defined by
\begin{equation}
f_{\J}(\X) := \J f(\J\X)\,.
\label{DefinitionJFunctionalCalculus}
\end{equation}
Conversely, every normalized operator monotone function $f: \left(0\,, \infty\right) \rightarrow \left(0\,, \infty\right)$ defines a unique $\J$-Kubo-Ando mean by Equation \eqref{RepresentationJMean}\,.

\end{theo}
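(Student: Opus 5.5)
The plan is to compose two bijections already available: the correspondence $\sigma_{\J} \leftrightarrow \sigma$ from the preceding Corollary, and the correspondence $\sigma \leftrightarrow f$ between Kubo-Ando means on $\mathscr{P}$ and normalized operator monotone functions. The latter is Theorem \ref{TheoremKuboAndoMean} for $\mathbb{D} \in \{\mathbb{R}, \mathbb{C}\}$ and Theorem \ref{TheoremKuboAndoMeanQuaternions} for $\mathbb{D} = \mathbb{H}$. The composite $\sigma_{\J} \mapsto \sigma \mapsto f$ is then a bijection between $\J$-Kubo-Ando means and normalized operator monotone functions. It remains to make this bijection explicit through formula \eqref{RepresentationJMean}.

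First, I invert \eqref{MeanInducedByJMean}. Since $\J^{2} = \Id_{n}$, setting $\X = \J\A$ and $\Y = \J\B$ for $\A, \B \in \mathscr{P}_{\J}$ gives
\begin{equation*}
\A \sigma_{\J} \B = \J\left((\J\A)\sigma(\J\B)\right) = \J(\J\A)^{\frac12} f\left((\J\A)^{-\frac12}(\J\B)(\J\A)^{-\frac12}\right)(\J\A)^{\frac12}\,,
\end{equation*}
where the second equality uses the representation \eqref{EquationKuboAndoSigma} of $\sigma$ (or Corollary \ref{CorollaryOne}(2) in the quaternionic case). Next, I check that $f_{\J}$ is well defined as a map $\mathscr{P}_{\J} \to \mathscr{P}_{\J}$. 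If $\X \in \mathscr{P}_{\J}$, then $\J\X \in \mathscr{P}$, so $f(\J\X) \in \mathscr{P}$. Hence $\J f(\J\X) = \Phi_{\J}^{-1}(f(\J\X)) \in \mathscr{P}_{\J}$ by Lemma \ref{LemmaBijectionPJP}.

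The heart of the argument is a bookkeeping computation with $\bullet$, using $\X^{t}_{\J} = \J(\J\X)^{t}$ and $\J^{2} = \Id_{n}$. I would compute
\begin{equation*}
\A^{-\frac12}_{\J} \bullet \B \bullet \A^{-\frac12}_{\J} = \J(\J\A)^{-\frac12}\J\,\B\,\J\,\J(\J\A)^{-\frac12} = \J\,(\J\A)^{-\frac12}(\J\B)(\J\A)^{-\frac12}\,.
\end{equation*}
Applying $f_{\J}$ to this gives $\J f\left((\J\A)^{-\frac12}(\J\B)(\J\A)^{-\frac12}\right)$. Multiplying on both sides by $\A^{\frac12}_{\J} = \J(\J\A)^{\frac12}$ via $\bullet$, the inner factors of $\J$ cancel in pairs. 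The result is exactly $\J(\J\A)^{\frac12} f(\cdots)(\J\A)^{\frac12}$, which matches the expression above and proves \eqref{RepresentationJMean}.

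For the converse, given a normalized operator monotone $f$, let $\sigma_f$ be the Kubo-Ando mean it defines on $\mathscr{P}$. Transport it via the Corollary to the $\J$-Kubo-Ando mean $\A \sigma_{\J} \B := \J((\J\A)\sigma_f(\J\B))$. By the computation above, this mean is given by \eqref{RepresentationJMean}. Uniqueness follows from injectivity of both correspondences (Lemma \ref{LemmaTwo}).

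The main point of care, where the Corollary's proof was brief, is checking that the transported operation really satisfies the $\J$-congruence axiom. The key identity is $\J(g^{\sharp}\A g) = g^{*}(\J\A)g$. So $\J$-congruence by $g$ corresponds to ordinary congruence by $g^{*} \in \GL_{n}(\mathbb{D})$, and the axiom follows from congruence invariance of $\sigma_f$. Monotonicity follows from the definition of $\preceq_{\J}$. Normalization follows from $\J\J = \Id_{n}$ together with $\Id_{n}\sigma_f\Id_{n} = \Id_{n}$, which gives $\J\sigma_{\J}\J = \J$.
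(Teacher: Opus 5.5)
Your proposal is correct and follows the paper's proof. You compose the $\sigma_{\J} \leftrightarrow \sigma$ bijection from the preceding Corollary with the classical (or quaternionic) Kubo-Ando correspondence, then unwind the $\bullet$-products using $\X^{t}_{\J} = \J(\J\X)^{t}$ and $\J^{2} = \Id_{n}$, exactly as the paper does. Your extra checks usefully fill in what the paper's Corollary leaves implicit: that $f_{\J}$ maps $\mathscr{P}_{\J}$ into itself, and that the transported operation satisfies the $\J$-axioms via $\J(g^{\sharp}\A g) = g^{*}(\J\A)g$; continuity, which you do not mention, is immediate since $\X \mapsto \J\X$ is continuous.
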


\begin{proof}

Let $\sigma_{\J}$ be a $\J$-Kubo--Ando mean on $\mathscr{P}_{\J}$. By the previous corollary, it induces a unique Kubo-Ando mean $\sigma$ on $\mathscr{P}$ given by
\begin{equation*}
\X \sigma \Y = \Phi_{\J}\left(\Phi_{\J}^{-1}(\X) \sigma_{\J} \Phi_{\J}^{-1}(\Y)\right)\,.
\end{equation*}
By the classical Kubo-Ando correspondence (see Theorems \ref{TheoremKuboAndoMean} and \ref{TheoremKuboAndoMeanQuaternions}), there exists a unique normalized operator monotone function
\begin{equation*}
f: \left(0\,, \infty\right) \longrightarrow \left(0\,, \infty\right)
\end{equation*}
such that
\begin{equation}
\X \sigma \Y = \X^{\frac{1}{2}}f\left(\X^{-\frac{1}{2}}\Y\X^{-\frac{1}{2}}\right)\X^{\frac{1}{2}}\,, \qquad \left(\X\,, \Y \in \mathscr{P}\right)\,.
\label{ClassicalRepresentationTransport}
\end{equation}
Let $\A\,, \B \in \mathscr{P}_{\J}$. Since
\begin{equation*}
\Phi_{\J}(\A) = \J\A\,, \qquad \text{and} \qquad \Phi_{\J}(\B) = \J\B\,,
\end{equation*}
we obtain
\begin{equation*}
\A \sigma_{\J} \B = \Phi_{\J}^{-1}\left(\Phi_{\J}(\A) \sigma \Phi_{\J}(\B)\right) = \J\left[\left(\J\A\right)^{\frac{1}{2}}f\left(\left(\J\A\right)^{-\frac{1}{2}}\left(\J\B\right)\left(\J\A\right)^{-\frac{1}{2}}\right)(\J\A)^{\frac{1}{2}}\right]\,.
\end{equation*}
Using that 
\begin{equation*}
\X \bullet \Y = \X\J\Y \qquad \text{ and } \qquad \A^{t}_{\J} = \J\left(\J\A\right)^{t}\,,
\end{equation*}
we get
\begin{equation*}
\J\left[\left(\J\A\right)^{-\frac{1}{2}}\left(\J\B\right)\left(\J\A\right)^{-\frac{1}{2}}\right] = \A^{-\frac{1}{2}}_{\J} \bullet \B \bullet \A^{-\frac{1}{2}}_{\J}\,.
\end{equation*}
Moreover, by the definition of $f_{\J}$,
\begin{equation*}
f_{\J}(\X) = \J f(\J\X)\,.
\end{equation*}
Using these identities in the preceding formulas gives
\begin{equation*}
\A \sigma_{\J} \B = \A^{\frac{1}{2}}_{\J} \bullet  f_{\J}\left(\A^{-\frac{1}{2}}_{\J} \bullet  \B \bullet  \A^{-\frac{1}{2}}_{\J}\right) \bullet \A^{\frac{1}{2}}_{\J}\,.
\end{equation*}
The uniqueness of $f$ follows from the uniqueness of the representing function of the induced mean $\sigma$ on $\mathscr{P}$\,.

\noindent Conversely, let $f: \left(0\,, \infty\right) \rightarrow \left(0\,, \infty\right)$ be a normalized operator monotone function. By the classical Kubo-Ando theorem, $f$ determines a unique Kubo-Ando mean $\sigma$ on $\mathscr{P}$. Transporting $\sigma$ through the bijection
\begin{equation*}
\Phi_{\J}: \mathscr{P}_{\J} \longrightarrow \mathscr{P}
\end{equation*}
defines a unique $\J$-Kubo-Ando mean $\sigma_{\J}$ on $\mathscr{P}_{\J}$. The computation above shows that this mean is given by Equation \eqref{RepresentationJMean}. Hence, the correspondence is bijective\,.

\end{proof}

\begin{remark}

The previous theorem allows us to transport every classical Kubo-Ando mean on $\mathscr{P}$ to a $\J$-Kubo-Ando mean on $\mathscr{P}_{\J}$. In particular, several familiar means admit simple closed expressions. For example, the arithmetic mean is preserved under the correspondence:
\begin{equation*}
\A \nabla_{\J} \B = \J\left(\left(\J\A\right) \nabla \left(\J\B\right)\right)  = \J\left(\frac{\J\A+\J\B}{2}\right) = \frac{\A + \B}{2}\,.
\end{equation*}

\noindent Similarly, the left-trivial mean, the parallel sum, and the harmonic mean satisfy
\begin{equation*}
\A \omega_{\ell, \J} \B = \A\,, \qquad \A :_{\J} \B = \left(\A^{-1}+\B^{-1}\right)^{-1}\,, \qquad \A !_{\J} \B = 2\left(\A^{-1}+\B^{-1}\right)^{-1}\,.
\end{equation*}
More generally, if $\M_{t}$ denotes the classical power mean associated with the representing function
\begin{equation*}
f_{t}(x) = \left(\frac{1+x^{t}}{2}\right)^{\frac{1}{t}}\,,
\end{equation*}
then its $\J$-analogue is given by
\begin{equation*}
\M_{\J,t}(\A\,, \B) = \A_{\J}^{\frac{1}{2}} \bullet \left(\frac{\J+\left(\A^{-\frac{1}{2}}_{\J} \bullet  \B \bullet \A^{-\frac{1}{2}}_{\J}\right)^{t}_{\J}}{2}
\right)^{\frac{1}{t}}_{\J} \bullet \A^{\frac{1}{2}}_{\J}\,.
\end{equation*}

\end{remark}

\begin{corollary}

The operator monotone function $f(x) = \sqrt{x}$ induces the $\J$-geometric mean
\begin{equation}
\A \sharp_{\J} \B = \A^{\frac{1}{2}}_{\J} \bullet \left(\A^{-\frac{1}{2}}_{\J} \bullet \B \bullet \A^{-\frac{1}{2}}_{\J}\right)^{\frac{1}{2}}_{\J} \bullet \A^{\frac{1}{2}}_{\J}\,, \qquad \left(\A\,, \B \in \mathscr{P}_{\J}\right)\,.
\label{JGeometricMean}
\end{equation}
Moreover, if $\A \bullet \B = \B \bullet \A$, then
\begin{equation}
\A \sharp_{\J} \B = \left(\A \bullet \B\right)^{\frac{1}{2}}_{\J}\,.
\label{JGeometricMeanCommuting}
\end{equation}

\end{corollary}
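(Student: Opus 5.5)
The first formula \eqref{JGeometricMean} is just the specialization of \eqref{RepresentationJMean} to $f(x)=\sqrt{x}$, once we check that $f_{\J}$ coincides with the $\J$-power of exponent $\frac{1}{2}$. The function $f(x)=\sqrt{x}$ is a normalized operator monotone function (L\"owner--Heinz), so the previous theorem yields a $\J$-Kubo-Ando mean $\sharp_{\J}$ given by \eqref{RepresentationJMean}. For $\X\in\mathscr{P}_{\J}$ we have $\J\X\in\mathscr{P}$, and the functional calculus gives $f(\J\X)=(\J\X)^{\frac{1}{2}}$. Hence
\begin{equation*}
f_{\J}(\X)=\J(\J\X)^{\frac{1}{2}}=\X^{\frac{1}{2}}_{\J},
\end{equation*}
by the identity $\X^{t}_{\J}=\J(\J\X)^{t}$. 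Substituting this into \eqref{RepresentationJMean} gives \eqref{JGeometricMean}.

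For the commuting case, I would transport everything to $\mathscr{P}$. Set $\X=\J\A$ and $\Y=\J\B$, both in $\mathscr{P}$. Then
\begin{equation*}
\A\bullet\B=\A\J\B=\J\X\Y,\qquad \B\bullet\A=\J\Y\X,
\end{equation*}
so the hypothesis $\A\bullet\B=\B\bullet\A$ is equivalent to $\X\Y=\Y\X$. By the construction in the proof of the previous theorem,
\begin{equation*}
\A\sharp_{\J}\B=\J\,(\X\sharp\Y),
\end{equation*}
where $\sharp$ is the classical geometric mean. For commuting positive $\X,\Y$, I would diagonalize them simultaneously with a single unitary (quaternionic unitary when $\mathbb{D}=\mathbb{H}$, using \eqref{SpectralDecompositionQuaternion}). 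This shows that $\X^{-\frac{1}{2}}$, $\Y$ and $\X^{\frac{1}{2}}$ all commute, and it yields
\begin{equation*}
\X\sharp\Y=\X^{\frac{1}{2}}\X^{-\frac{1}{2}}\Y^{\frac{1}{2}}\X^{\frac{1}{2}}=(\X\Y)^{\frac{1}{2}},
\end{equation*}
where $\X\Y\in\mathscr{P}$.

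Finally, I need to identify $\J(\X\Y)^{\frac{1}{2}}$ with $(\A\bullet\B)^{\frac{1}{2}}_{\J}$. Since $\A\bullet\B=\J(\X\Y)$ and $\X\Y\in\mathscr{P}$, we get $\A\bullet\B\in\mathscr{P}_{\J}$, and therefore
\begin{equation*}
(\A\bullet\B)^{\frac{1}{2}}_{\J}=\J\bigl(\J(\A\bullet\B)\bigr)^{\frac{1}{2}}=\J(\X\Y)^{\frac{1}{2}},
\end{equation*}
which gives \eqref{JGeometricMeanCommuting}.

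The main obstacle is the commuting step: checking that $\X\Y$ is positive and that $(\X\Y)^{\frac{1}{2}}=\X^{\frac{1}{2}}\Y^{\frac{1}{2}}$. Simultaneous diagonalization handles both points, and in the quaternionic case it requires the simultaneous spectral theorem for commuting Hermitian matrices over $\mathbb{H}$. This can be obtained through the embedding $\Psi$, or by diagonalizing $\X$ and then $\Y$ on the eigenspaces of $\X$.
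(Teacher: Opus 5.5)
Your proposal is correct and follows the paper's proof essentially step for step. You identify $f_{\J}(\X)=\J(\J\X)^{\frac{1}{2}}=\X^{\frac{1}{2}}_{\J}$ to get the first formula, and in the commuting case you transport to $\X=\J\A$, $\Y=\J\B$, use $\X\sharp\Y=(\X\Y)^{\frac{1}{2}}$, and pull back by $\J$. You also supply two details the paper takes for granted: the simultaneous-diagonalization argument for $\X\sharp\Y=(\X\Y)^{\frac{1}{2}}$ (including over $\mathbb{H}$), and the check that $\A\bullet\B\in\mathscr{P}_{\J}$, so that its $\J$-square root is defined.
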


\begin{proof}

The first identity follows directly from the preceding theorem by taking $f(x)=\sqrt{x}$. Indeed, the induced $\J$-functional calculus is given by
\begin{equation*}
f_{\J}(\X) = \J\left(\J\X\right)^{\frac{1}{2}} = \X^{\frac{1}{2}}_{\J}\,.
\end{equation*}
Suppose now that $\A \bullet \B = \B \bullet \A$. Applying $\Phi_{\J}$ gives
\begin{equation*}
\left(\J\A\right)\left(\J\B\right) = \left(\J\B\right)\left(\J\A\right)\,.
\end{equation*}
Hence, the positive matrices $\J\A$ and $\J\B$ commute. Therefore, their classical geometric mean satisfies
\begin{equation*}
\left(\J\A\right) \sharp \left(\J\B\right) = \left(\left(\J\A\right)\left(\J\B\right)\right)^{\frac{1}{2}}\,.
\end{equation*}
By the definition of the transported mean, we obtain
\begin{eqnarray*}
\A \sharp_{\J} \B & = & \J\left(\left(\J\A\right) \sharp \left(\J\B\right)\right) = \J\left(\left(\J\A\right)\left(\J\B\right)\right)^{\frac{1}{2}} \\
& = & \J\left(\J\left(\A \bullet \B\right)\right)^{\frac{1}{2}} = \left(\A \bullet \B\right)^{\frac{1}{2}}_{\J}\,.
\end{eqnarray*}

\end{proof}

\section{The nearest J-Hermitian matrix}

We conclude this paper by characterizing the orthogonal projection onto the space of $\J$-Hermitian matrices with respect to the Frobenius norm\,.

\begin{remark}

We equip $\Mat_{n}(\mathbb{D})$ with the Frobenius norm
\begin{equation*}
\left\|\X\right\|_{\F} = \sqrt{\tr\left(\X^{*}\X\right)}\,, \qquad \left(\X \in \Mat_{n}(\mathbb{D})\right)\,.
\end{equation*}
Since $\J^{*}\J = \Id_{n}$, left multiplication by $\J$ preserves the Frobenius norm. More precisely,
\begin{equation*}
\left\|\J\X\right\|_{\F} = \left\|\X\right\|_{\F}\,, \qquad \left(\X \in \Mat_{n}(\mathbb{D})\right)\,.
\end{equation*}
Therefore, for every $\A\,, \B \in \Mat_{n}(\mathbb{D})$, we have
\begin{equation*}
\left\|\A-\B\right\|_{\F} = \left\|\J\A-\J\B\right\|_{\F}\,.
\end{equation*}

\end{remark}

\begin{proposition}

Let $\M \in \Mat_{n}(\mathbb{D})$. Then the unique matrix $\X \in \mathfrak{p}_{\J}$ minimizing the Frobenius distance
\begin{equation*}
\left\|\M - \X\right\|_{\F}
\end{equation*}
is given by
\begin{equation}
\X = \frac{\M+\M^{\sharp}}{2}\,.
\label{NearestJHermitian}
\end{equation}

\end{proposition}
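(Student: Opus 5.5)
The plan is to reduce the statement to the classical fact that the nearest Hermitian matrix to $\N$ in Frobenius norm is $\frac{\N+\N^{*}}{2}$. We transport the problem through $\Phi_{\J}$ (Lemma \ref{LemmaBijectionPJP}) and use the isometry property of left multiplication by $\J$ from the preceding remark.

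First, by the remark, for every $\X \in \mathfrak{p}_{\J}$ we have $\left\|\M-\X\right\|_{\F} = \left\|\J\M-\J\X\right\|_{\F}$. As $\X$ ranges over $\mathfrak{p}_{\J}$, the matrix $\Y := \J\X = \Phi_{\J}(\X)$ ranges bijectively over $\mathfrak{p}$. So minimizing $\left\|\M-\X\right\|_{\F}$ over $\mathfrak{p}_{\J}$ is equivalent to minimizing $\left\|\N-\Y\right\|_{\F}$ over $\Y \in \mathfrak{p}$, with $\N := \J\M$. Minimizers then correspond via $\X = \J\Y$, and uniqueness transfers as well.

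Second, we solve the classical problem. The space $\Mat_{n}(\mathbb{D})$ carries the real inner product $\langle \X, \Y\rangle = \mathrm{Re}\,\tr(\X^{*}\Y)$, whose norm is $\left\|\cdot\right\|_{\F}$. This remains valid for $\mathbb{D}=\mathbb{H}$, since $\mathrm{Re}\,\tr$ is cyclic and conjugation-invariant there. With respect to this inner product, $\Mat_{n}(\mathbb{D}) = \mathfrak{p} \oplus \mathfrak{k}$, where $\mathfrak{k}$ is the space of skew-Hermitian matrices. The decomposition is orthogonal: for $\Y \in \mathfrak{p}$ and $\K \in \mathfrak{k}$,
\begin{equation*}
\mathrm{Re}\,\tr(\Y\K) = \mathrm{Re}\,\tr\left((\Y\K)^{*}\right) = -\mathrm{Re}\,\tr(\K\Y) = -\mathrm{Re}\,\tr(\Y\K)\,.
\end{equation*}
Writing $\N = \frac{\N+\N^{*}}{2} + \frac{\N-\N^{*}}{2}$, Pythagoras gives, for $\Y \in \mathfrak{p}$,
\begin{equation*}
\left\|\N-\Y\right\|_{\F}^{2} = \left\|\tfrac{\N+\N^{*}}{2}-\Y\right\|_{\F}^{2} + \left\|\tfrac{\N-\N^{*}}{2}\right\|_{\F}^{2}\,.
\end{equation*}
Hence $\Y = \frac{\N+\N^{*}}{2}$ is the unique minimizer.

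Third, we translate back. The minimizer is
\begin{equation*}
\X = \J\,\frac{\J\M + \M^{*}\J}{2} = \frac{\M + \J\M^{*}\J}{2} = \frac{\M+\M^{\sharp}}{2}\,,
\end{equation*}
using $\J^{2}=\Id_{n}$ and $\J^{*}=\J$. Alternatively, one can check directly that $\frac{\M+\M^{\sharp}}{2} \in \mathfrak{p}_{\J}$, since $(\M^{\sharp})^{\sharp} = \M$.

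The only step needing care is the orthogonality in the quaternionic case, because $\tr$ is not cyclic over $\mathbb{H}$; we handle this by working with $\mathrm{Re}\,\tr$, which is cyclic. If that is in doubt, we can argue entrywise instead. The diagonal entries of $\Y$ are real and $y_{ji} = \overline{y_{ij}}$, so the objective splits into the sums $|m_{ii}-y_{ii}|^{2}$ and $|m_{ij}-y_{ij}|^{2}+|m_{ji}-\overline{y_{ij}}|^{2}$. Each is uniquely minimized by the averaging formula, by strict convexity of $|\cdot|^{2}$ on $\mathbb{D}\cong\mathbb{R}^{k}$.
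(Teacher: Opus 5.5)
Your proposal is correct and follows the same route as the paper. You transport the problem through $\Phi_{\J}$, using that left multiplication by $\J$ is a Frobenius isometry, solve the nearest-Hermitian problem for $\J\M$, and multiply back by $\J$. The only difference is that the paper cites the classical nearest-Hermitian fact from Bhatia, while you prove it via the orthogonal Hermitian/skew-Hermitian splitting with respect to $\mathrm{Re}\,\tr$, which also explicitly covers the case $\mathbb{D}=\mathbb{H}$.
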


\begin{proof}

By the previous remark,
\begin{equation*}
\left\|\M - \X\right\|_{\F} = \left\|\J\M - \J\X\right\|_{\F}\,.
\end{equation*}
Moreover,
\begin{equation*}
\X \in \mathfrak{p}_{\J} \qquad \Longleftrightarrow \qquad \J\X \in \mathfrak{p}\,.
\end{equation*}
Therefore, minimizing the distance from $\M$ to $\mathfrak{p}_{\J}$ is equivalent to minimizing the distance from $\J\M$ to the space $\mathfrak{p}$ of Hermitian matrices. It is well known (see, for example, \cite{BHATIA}) that the unique Hermitian matrix nearest to $\J\M$ is
\begin{equation*}
\frac{\J\M + \left(\J\M\right)^{*}}{2} = \frac{\J\M + \M^{*}\J}{2}\,.
\end{equation*}
Multiplying by $\J$ yields
\begin{equation*}
\X = \J\left(\frac{\J\M + \M^{*}\J}{2}\right) = \frac{\M + \J\M^{*}\J}{2} = \frac{\M+\M^{\sharp}}{2}\,,
\end{equation*}
which proves the result\,.

\end{proof}

\end{document}